\documentclass{article}
\usepackage{amsmath}
\usepackage{amsthm}
\usepackage{amsfonts}
\usepackage{yfonts}
\usepackage{soul}

\usepackage{anyfontsize}
\usepackage{tikz}
\usepackage{amssymb}
\usepackage{bbm} 
\usepackage{color}
\usepackage{xcolor}
\usepackage{hyperref}
\usepackage{multirow}
\usepackage{graphicx}
\usepackage{comment}

\newcommand{\bb}[1]{{\mathbb {#1}}}
\newcommand{\bbm}[1]{{\mathbbm {#1}}}

\newcommand{\mc}[1]{{\mathcal {#1}}}

\newcommand\abs[1]{\left\vert #1\right\vert}
\newcommand\crt[1]{\left[ #1 \right]}
\newcommand\prt[1]{\left( #1 \right)} 
\newcommand{\tod}{\overset{(d)}{\longrightarrow}}

\newcommand{\todtwo}{\overset{L^2}{\longrightarrow}}
\newcommand{\Var}{\ensuremath{\bb{V}\mathrm{ar}}}
\newcommand{\var}{\ensuremath{\bb{V}\mathrm{ar}}}
\newcommand\Ind[1]{ \mathbbm{1}_{#1}} 
\newcommand\ind[1]{ \mathbbm{1}_{#1}} 

 \newcommand\norm[1]{\left\Vert #1\right\Vert}
\newcommand\chv[1]{\{\,#1\,\}} 

\newtheorem{theorem}{Theorem}[section]         
\newtheorem{lemma}[theorem]{Lemma}
\newtheorem{corollary}[theorem]{Corollary}
\theoremstyle{definition}
\newtheorem{definition}[theorem]{Definition}

\theoremstyle{remark}
\newtheorem{remark}[theorem]{Remark}

\makeatletter
\def\namedlabel#1#2{\begingroup
    #2%
    \def\@currentlabel{#2}%
    \phantomsection\label{#1}\endgroup
}

\makeatother

\title{A cohesive account on the ergodic behaviours and scaling limits of Random Walks in Cooling Random Environments}
\author{Luca Avena \footnote{Università degli Studi di Firenze, Viale Morgagni, 67/A, 50134, Firenze, 055 2751400, IT. Email:~\href{mailto:luca.avena@unifi.it}{\texttt{luca.avena@unifi.it}}.} \and Conrado da Costa\footnote{\raggedright Instituto de matemática estatística e ciência da computação, Universidade de São Paulo, Rua do Matão, Cidade Universitária, Butantã,  1010, São Paulo, 05508-090, BR. Email:~\href{mailto:conrado.da-costa@durham.ac.uk}{\texttt{conrado.dacosta@ime.usp.br}}.}}

\begin{document}
\maketitle
\begin{abstract}
Transport in disordered media is a central theme in probability and statistical physics, where randomness in the underlying medium produces phenomena such as localization, anomalous scaling, and slow relaxation. A paradigmatic model for transport in disordered media is that of Random Walks in Random Environments (RWRE), which has been extensively studied since the 1970's and is by now well understood in one dimension.

More recently, several works have explored perturbations of models of transport in disordered media aimed at interpolating between static disorder and fully homogenized dynamics. 
Random walks in cooling random environments (RWCRE), introduced in this context, constitute a key example: the environment is dynamically resampled at prescribed times and kept fixed in between, giving rise to a delicate ``quasi-ergodic'' structure in time allowing to interpolate between homogeneous random walks and classical RWRE.

The purpose of this paper is twofold. A first goal is to offer an original survey on the main results for RWCRE in 1d: recurrence criteria, law of large numbers, large deviations, and fluctuation phenomena across different resampling regimes. Those results have been derived in a series of recent works, \cite{AdH17springer, AveChidCosdHol19, Yon19, AveChidCosdHol22,AvedCosPet23, AvedCos24game, dCosPetXie23b} and are complemented here with a number of new statements aiming at presenting a unified phenomenological picture. As a second goal, we try to extract a coherent conceptual picture that highlights structural mechanisms -- such as ergodic limits, persistence under perturbations and replacement principles -- that extend beyond the specific setting of RWCRE and are relevant for a broader class of disordered systems that can be perturbed by introducing independent resetting in the same fashion.
\end{abstract}

\noindent
{\em Key words:}
Random walk; dynamic random environments; phase transition; scaling limits; ergodicity; resampling; cooling.

\noindent
{\em AMS Subject Classification:} 
60K37 (Primary) 
60F15, 
60F10, 
60F05 
(Secondary)

\tableofcontents

\section{RWRE, cooling, and perturbation via independent resetting}

Random walks in random environments (RWRE) constitute one of the central models
for transport in disordered media. In its simplest form, a RWRE describes the
motion of a particle on $\bb{Z}^d$ whose transition probabilities are sampled
once from a random environment and then kept fixed for all times. This model exhibits a rich phenomenology, including trapping,
sub-ballistic motion, and anomalous fluctuations. Owing to several
decades of intensive research, the qualitative and quantitative behaviour of RWRE is by now well understood in dimension one, while various challenging open problems remain in other settings, see e.g. \cite{Sol75, Zei04,MetNehZoy26,GroRamSagZan26} 

A natural question, motivated both by physics and probability, is how robust
these phenomena are under dynamic perturbations of the environment. One particularly
transparent class of perturbations consists in \emph{independent resetting} or
\emph{resampling} of the environment at prescribed times. In this setting, the
environment is refreshed intermittently, independently of the past, while the
walk evolves according to the currently active environment between two resampling events. Such perturbations weaken the effects of the space-time correlations without fully
destroying disorder and introduce a tunable time scale controlling the strength
of homogenization.

Random walks in cooling random environments (RWCRE), introduced 
in \cite{AdH17springer}, form a prominent example of this mechanism. In RWCRE, the
environment is resampled along an arbitrary sequence (possibly random) of times, called the
\emph{cooling sequence}, and is kept fixed in between. This construction
interpolates between two well-understood extremes: homogeneous random walks,
corresponding to resampling at every step, and classical RWRE, corresponding to
no resampling at all. Depending on the growth of the resampling intervals, RWCRE
exhibits a variety of cross-over regimes leading to new behaviour or persistence of the features of one of the two interpolated media (static and homogeneous).
Let us stress that such a cooling random environment is a specific tractable type of dynamic random media, and in recent years much attention has been devoted in trying to analyse RW models in evolving media on lattices, for progresses in this growing literature we refer the reader to \cite{avena_thesis_10, DENHOLLANDER2014785, MR3108811, MR2786643, HDHS15, BHT18, OtaMil18, HilKioTei20, Avena4, Avena3, Avena2}
In this manuscript we provide a unified overview of the results obtained so far for the specific RWCRE model, and at the same time we use RWCRE as a guiding example to identify structural mechanisms that extend beyond this specific model. In
particular, we seek to disentangle which asymptotic phenomena are genuinely tied to the details of RWCRE and which arise from more general features of resampling dynamics. To this end, we introduce a general framework that captures the essential
structure of processes evolving under resampling, while allowing the underlying
dynamics and the resampling mechanism to be varied independently. RWCRE will
later be recovered as a particular instance of this framework. Before entering  the details we give next an outline of the paper.

\subsection{Outline of the paper} \label{ssec:outline}
Apart from the introductory Section \ref{sec:frame}, the remaining ones address structurally different mathematical questions. We have hence tried to present them in a self-contained manner, so that the main messages and tools can be appreciated separately.

\begin{itemize}
    \item Section \ref{sec:frame} ({\bf Models \& underlying subtle mechanisms }) starts with precise definitions of the general framework, Sec. \ref{Genframe}, and the two specific 1d RW models in random environments and how they relate, Sec. \ref{s:rwre_rwcre_kernel}. We then discuss the structure of the possible resampling maps, Sec. \ref{maps}, the type of behaviour to be expected for the main asymptotics of the RWCRE, \ref{s:phenomenology}, and the methods of proofs of general interest behind these results    \ref{s:methods}. 
    \item Section \ref{sec:homogenization} ({\bf Homogenization for non-cooling maps}) discusses how to show that homogenization takes place for the easiest class of resampling maps where no effective cooling is in place. We will in particular present a
    brief original perspective on this simple class of resettings following a large deviation perspective. 
    \item Section \ref{sec:recurrencevstransience} ({\bf Recurrence in 1d}) discusses the current understanding of recurrence and transience properties of RWCRE in 1d in relation to the classical Solomon's criterion known for RWRE.

    \item Section \ref{sec:ergodic} ({\bf Cooling ergodic theory \& game of mass}) discusses, under both the quenched and annealed laws, the Law of large Numbers (LLN) and Large Deviation Principle (LDP) for the average displacement. 
    These problems are in fact related to the ``almost ergodic'' sums emerging from the RWCRE model. For these general sums we discuss weak and strong convergence statements for centred variables, Section~\ref{sec:gradual-sums}, as well as regularity tools to determine explicit limits in the non-centred case. The latter is what we refer to as \emph{the game of mass}, which we present in Section \ref{ssec:game_of_mass} specialized to determining the value of the asymptotic speed.
    As far as the RWCRE in 1d is concerned, apart from reviewing the results mainly derived in \cite{AdH17springer, AveChidCosdHol19, AvedCos24game}, we prove in Section \ref{WLLNannLDP} a general rate $n$ annealed LDP, Theorem \ref{annLDPthm}, which was not discussed in previous works.    
    \item Section \ref{sec:Fluctuations} ({\bf Scaling limits}), discusses results and underlying methods of proofs as far as fluctuations  are concerned. We present the various results across different regimes developed in \cite{AdH17springer,AveChidCosdHol22, AvedCosPet23, dCosPetXie23b,Yon19} in a unified approach,  from the cohesive perspective of the proof methodology.  In Section \ref{Ssec:Replacement_works} we present in a novel unifying theorem Theorem~\ref{thm:rlpts} the caracterization of the scaling limits for the regimes where a certain replacement method holds. In Section~\ref{Ssec:replacement_fails},  we treat the more difficult and subtle regimes where the replacement method fails. 
\end{itemize}

\section{Models, stucture, questions \& methods}\label{sec:frame}

\subsection{General framework}
\label{Genframe}
We begin by describing an abstract setting that underlies RWCRE and related
models.
Let $\bb{X} = (\bb{X},\mathcal{B})$ be a separable topological abelian group
endowed with its Borel $\sigma$-algebra. In particular, the addition map
$+\colon \bb{X}\times\bb{X}\to\bb{X}$ is measurable and commutative.
This setting is both natural and sufficiently general: it includes the relevant state space for RWCRE, $\bb{Z}$ or more generally $\bb{R}^d$, and ensures
that increments of the underlying processes can be combined additively in an
order-independent and measurable manner. Moreover, several of the limit theorems
that motivate this framework—laws of large numbers, central limit theorems, and
concentration inequalities—are inherently formulated in additive, commutative
settings.

Let $(\Omega,\mathcal{F},\bb{P})$ be a probability space. For each
$j\in\bb{N}$, let $
\mathbf{X}^{(j)} = (X^{(j)}_t,\ t\ge 0)$ 
be an $\bb{X}$-valued stochastic process, meaning that for every $t\ge 0$ the
map $X^{(j)}_t\colon \Omega\to\bb{X}$ is measurable. The family of underlying processes will be denoted by
\begin{equation}\label{underlying}
    \mathfrak{X}=(\mathbf{X}^{(j)},\ j\in\bb{N}).
\end{equation}
We further fix $\tau = (\tau(n),\ n\in\bb{Z}_+ \cup \{\infty\})$, a sequence satisfying
\begin{equation}\label{basic-resampling-tau}
\tau(0)=0, \qquad \tau(n)\leq \tau(n+1), 
\qquad\text{and}\qquad
\lim_n \tau(n) = \infty.
\end{equation}
A function $\tau:\bb{Z}_+ \to \bb{Z}_+ \cup \{\infty\}$ satisfying \eqref{basic-resampling-tau} will be called a \emph{resampling map}.
Combining $\mathfrak{X}$ with
the resampling map $\tau$, we define the associated \emph{resampled process}
$X=(X_t,\ t\ge0)$ by
\begin{equation}\label{cooling-general-tau}
X_t = X_t(\mathfrak{X},\tau)
:= \sum_{j\geq 1} X^{(j)}_{(\tau(j)\wedge t)-(\tau(j-1)\wedge t)}.
\end{equation}
From~\eqref{basic-resampling-tau} it follows that only finitely many indices $j$ satisfy
$\tau(j)\le t$, so the sum in~\eqref{cooling-general-tau} is finite and hence
well defined.

Among the resampling maps $\tau$, a special subclass is formed by those for which the block
lengths
\begin{equation}\label{effective-cooling}  
T_j := \tau(j)-\tau(j-1)
\quad\text{
satisfy } \quad T_j\to\infty \quad\text{ as } \quad j\to\infty, 
\end{equation}
we call such a $\tau$ a \emph{cooling map}. This
terminology reflects the intuition that, in the absence of resampling, the system
evolves in a frozen environment. Large values of $T_j$ correspond to long stretches
during which the dynamics remain unchanged, and as $T_j$ diverges the evolution
approaches a genuinely frozen regime.

The general objective of this framework is to analyse the asymptotic behaviour of
the process $X=(X_t(\mathfrak{X},\tau),\ t\ge0)$ under different choices of the
underlying processes $\mathfrak{X}$ and the resampling map $\tau$.

The ``patchwork'' representation in  \eqref{cooling-general-tau} reduces much of the analysis to understanding 
how the asymptotic behaviour of $X$ emerges from the combined effect of its constituent pieces.

\subsection{RWRE and RWCRE}
\label{s:rwre_rwcre_kernel}

We specialise the abstract patchwork construction
\eqref{cooling-general-tau} to the one-dimensional nearest-neighbour setting
that underlies RWRE and RWCRE.

\subsubsection{Static environments}

An \emph{environment} is a field
$\omega=(\omega(x):x\in\bb{Z})\in(0,1)^{\bb{Z}}$, where $\omega(x)$ represents
the probability to jump from $x$ to $x+1$ (and $1-\omega(x)$ is the
probability to jump from $x$ to $x-1$). To ease the exposition\footnote{Most of the discussed results can be properly generalized to e.g. spatially ergodic settings.} we will work throughout the paper under the assumption that $\omega$ is an i.i.d. sequence with law
\begin{equation}\label{eq:mu_iid}
\mu=\alpha^{\bb{Z}},
\end{equation}
where $\alpha$ is a non-degenerate probability measure on $(0,1)$;  and
that uniform ellipticity is in place, i.e.,
\begin{equation}\label{eq:ue}
\exists\,\mathfrak{c}>0:\qquad \alpha\big(\mathfrak{c}\le \omega(0)\le 1-\mathfrak{c}\big)=1.
\end{equation}

\begin{definition}[RWRE: quenched and annealed laws]\label{def:rwre}
Let $\omega$ be sampled from $\mu$. The \emph{random walk in random environment}
(RWRE) is the Markov chain $Z=(Z_n)_{n\in\bb{N}_0}$ on $\bb{Z}$ with transition
kernel
\begin{equation}\label{eq:rwre_kernel}
P^{\omega}(Z_{n+1}=x+e\mid Z_n=x)=
\begin{cases}
\omega(x), & e=+1,\\
1-\omega(x), & e=-1,
\end{cases}
\qquad x\in\bb{Z},\ n\in\bb{N}_0.
\end{equation}
We write $P_x^{\omega}$ for the \emph{quenched} law of $Z$ started at $x$ and
\begin{equation}\label{eq:annealed_rwre}
P_x^{\mu}(\,\cdot\,):=\int_{(0,1)^{\bb{Z}}} P_x^{\omega}(\,\cdot\,)\,\mu(d\omega)
\end{equation}
for the corresponding \emph{annealed} law (with expectations $E_x^{\omega}$ and
$E_x^{\mu}$).
\end{definition}

\subsubsection{Cooling dynamic environments}\label{ssec:cre}

Let $\tau:\bb{N}_0\to\bb{N}_0$ be a \emph{resampling map} as defined in \eqref{basic-resampling-tau}.
Let $\bar{\omega}=(\omega_k, k\in\bb{N})$ be an i.i.d. sequence of environments sampled from 
the law $\mu^{\bb{N}}$. The pair $(\bar{\omega},\tau)$ defines a space--time dynamic  environment by assigning $\omega_k$ to the time interval
$[\tau(k-1),\tau(k))$. Equivalently, at time $n$ the walk sees the environment
with index
\begin{equation}\label{eq:ell_def}
\ell_n:=\inf\{k\in\bb{N}:\tau(k)>n\}.
\end{equation}

\begin{definition}[RWCRE: kernel formulation]\label{def:rwcre_kernel}
Given a resampling map $\tau$ and an i.i.d. environment sequence
$\bar{\omega}\sim\mu^{\bb{N}}$, the \emph{random walk in cooling random
environment} (RWCRE) is the Markov chain $X=(X_n)_{n\in\bb{N}_0}$ on $\bb{Z}$
with transition kernel
\begin{equation}\label{eq:rwcre_kernel}
P^{\bar{\omega},\tau}(X_{n+1}=x+e\mid X_n=x)=
\begin{cases}
\omega_{\ell_n}(x), & e=+1,\\
1-\omega_{\ell_n}(x), & e=-1,
\end{cases}
\quad x\in\bb{Z},\ n\in\bb{N}_0,
\end{equation}
where $\ell_n$ is given by \eqref{eq:ell_def}. We denote by
$P_x^{\bar{\omega},\tau}$ the quenched law and by
\begin{equation}\label{eq:annealed_rwcre}
P_x^{\mu,\tau}(\,\cdot\,):=\int_{[(0,1)^{\bb{Z}}]^{\bb{N}}}
P_x^{\bar{\omega},\tau}(\,\cdot\,)\,\mu^{\bb{N}}(d\bar{\omega})
\end{equation}
the corresponding annealed law.
\end{definition}

A key feature of RWCRE is that its position at time $n$ can be decomposed into
independent contributions from successive blocks. Define the \emph{refreshed
increments} and the \emph{boundary increment} by
\begin{equation}\label{eq:space_incr}
Y_k:=X_{\tau(k)}-X_{\tau(k-1)},\quad k\in\bb{N},
\qquad
\bar{Y}^{\,n}:=X_n-X_{\tau(\ell_n-1)} ,
\end{equation}
and the \emph{boundary running time} by
\begin{equation}\label{eq:time_incr}
\bar{T}^{\,n}:=n-\tau(\ell_n-1).
\end{equation}
Then
\begin{equation}\label{eq:time_partition}
\sum_{k=1}^{\ell_n-1} T_k+\bar{T}^{\,n}=n,
\end{equation}
and, by construction,
\begin{equation}\label{eq:X_dec}
X_n=\sum_{k=1}^{\ell_n-1} Y_k+\bar{Y}^{\,n},\qquad n\in\bb{N}_0.
\end{equation}

The vector $(Y_1,\dots,Y_{\ell_n-1},\bar{Y}^{\,n})$ has independent components,
each component being distributed as a RWRE increment over the corresponding time
length in the corresponding environment. More precisely, for any measurable
$f:\bb{Z}\to\bb{R}$,
\begin{equation}\label{eq:increment_identification}
E_0^{\bar{\omega},\tau}\big[f(Y_k)\big]=E_0^{\omega_k}\big[f(Z_{T_k})\big],
\qquad
E_0^{\bar{\omega},\tau}\big[f(\bar{Y}^{\,n})\big]=E_0^{\omega_{\ell_n}}\big[f(Z_{\bar{T}^{\,n}})\big].
\end{equation}

\begin{figure}[htbp]
\begin{center}
\includegraphics[bb = 0 0 595 841, trim=2.8cm 11cm 2.8cm 14cm, scale=0.65]{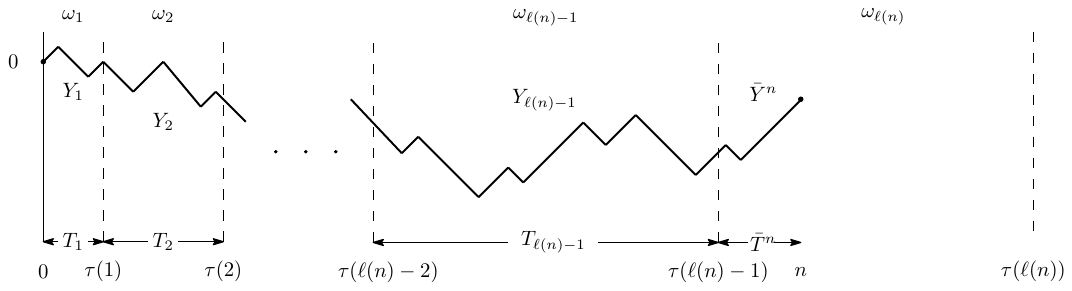}
\vspace{4mm}
\hspace{10mm}\caption{The decomposition of RWCRE in pieces of RWRE as presented in~\eqref{eq:X_dec}.}
\label{fig:RWCRE}
\end{center}
\end{figure}

\paragraph{Relation to the patchwork construction}
As shown in \eqref{eq:X_dec}, \eqref{eq:increment_identification} and in Figure \eqref{fig:RWCRE}, the kernel definition of RWCRE given in \eqref{eq:rwcre_kernel} fits the abstract resampling process
\eqref{cooling-general-tau} by taking the underlying family
$\mathfrak{X}=(\mathbf{X}^{(k)},k\geq 1)$ to be independent copies of the RWRE
increment process in environments $\omega_k$, and concatenating the increments
over the block lengths $(T_k)_{k\ge1}$ determined by $\tau$. In fact, both the quenched and annealed constructions starting from $x \in \bb{Z}$ fit into this framework provided we follow the inductive definition that sets $X_{\tau(0)} := x$ and for each $j \in \bb{N}$ the 
variable $X^{(j)}$ used to build $\mathfrak{X}$ is defined as the displacement of the quenched/annealed random walk starting from $X_{\tau(j-1)}$.
For convenience we shall consider that all random variables we consider --- be it the abstract family of processes $\mathfrak{X} = (X^{(j)}_n, n \in \bb{Z}_+, j \in \bb{N})$, the underlying environment $\omega = (\omega(x):x\in\bb{Z})$, the resampling maps $\tau$, or their increments $(T_k, k \in \bb{N})$ --- are defined in the same probability space $(\Omega, \mc{F}, \bb{P})$. Given random variable $W$ We shall denote by $ \bb{E}[W]$ the expectation of $W$ with respect to $\bb{P}$.

The patchwork representation \eqref{cooling-general-tau} shows that RWCRE is governed by
two ingredients: the underlying-block laws in $\mathfrak{X}$ (i.e.\ the RWRE
increments that make up each block $X^{(j)}, j \in \bb{N}$) and the resampling map $\tau$ (or equivalently the increment sequence $(T_k, k\ge1)$).

The main goal is to identify, for different choices of environment law, what sort of limiting behaviours emerge for the
process defined by \eqref{cooling-general-tau}  when choosing structurally different resampling maps $\tau$.

\subsection{Types of resampling maps}
\label{maps}
Heuristically, one may capture a cross-over phenomenon from homogeneous random walk  to static RWRE by comparing 
$X(\mathfrak{X},\tau_1)$ with $X(\mathfrak{X},\tau_\infty)$, where 
for every $n \in \bb{N}$,
$\tau_1(n):= n$ and  $\tau_\infty(n) := \infty$.

At first, using the notion of $f(n) \sim g(n)$ if $\lim_n f(n)/g(n) = 1$, one focuses on simple maps that interpolate monotonically between $\tau_1$ and $\tau_\infty$ such as the polynomial maps with  $\tau(n) \sim A n^a$ or the exponential maps with  $\tau \sim B e^b$ for different choices of $A,a>0$  or $B,b>0$.
Yet, depending on the type of questions under examination, much more general families of maps may be considered and different suitable notions of regularities can be introduced.

An operative general dichotomy is in fact between \emph{regular maps}, where a certain behaviour of $T_k$, such as cooling (see \eqref{effective-cooling}) or convergence in $L^1$ to a finite measure (see \eqref{L1-convergence}), is imposed and
\emph{non-regular resampling maps} where $T_k$ may oscillate, remain bounded on
subsequences, or display non-monotone eventual growth.
Identifying the class
of resampling maps that give rise to a certain behaviour is part of what we refer to as the \emph{game of mass}, Sec. \ref{ssec:game_of_mass}. 
One message of the paper is that resampling maps naturally allow to interpolate between the homogenized or frozen extremes, corresponding respectively to maps with strong dominance of very frequent resampling or with asymptotically negligible  resettings. Still,
\emph{general resampling maps} permit very flexible perturbations: in turn specific choices of $\tau$ can produce a random walk $X(\mathfrak{X}, \tau)$ that behaves very differently from any underlying random walk $X^{(j)}$ that make up $\mathfrak{X}$.

\subsection{Asymptotic behaviours at glance: persistence, perturbation, and cross-over driven by resampling}
\label{s:phenomenology}
In the 1d setting, we will be mainly interested in the following questions that characterise the RWCRE asymptotics.

\begin{enumerate}
\item \textbf{Recurrence versus transience.}
Does $X_n$ visit the origin infinitely often, or does it drift away (to the
right or to the left)?
This is qualitative question,  presented in Section~\ref{sec:recurrencevstransience}, reveals a dependence on the resampling scheme which can be highlighted as follows.
\begin{itemize}
\item \emph{Persistence of transience under cooling.}
When the underlying environment law $\mu = \alpha^\bb{Z}$ produces a right-transient RWRE, then
right-transience persists for all genuinely cooling maps, i.e.\ as soon as \eqref{effective-cooling} holds true.
Thus, transience is persistent (non-perturbable) under cooling maps.

\item \emph{Perturbability of recurrence.}
When the underlying RWRE is recurrent, it is possible to build (exotic) cooling maps which results into a transient RWCRE. Therefore, recurrence is a perturbable behaviour, i.e., there is a cooling map $\tau^*$ for which the process  $X^* = (X_t(\mathfrak{X},\tau^*), t \geq 0)$ is transient, even when for every $j \in \bb{N}$ the underlying $X^{(j)}$ processes is a recurrent random walks. 
\end{itemize}

Moreover, as we shall see, also the transience property becomes perturbable as soon as one considers resampling maps that are ``cooling'' in a weaker (Cesàro) sense, see \eqref{Cesarocooling}.
\item \textbf{Speed and deviations.}
Does the average speed $\frac{X_n}{n}$ converge and, if so, what is the limiting speed? In Section \ref{sec:ergodic} we consider whether the limiting speed remains that of the static RWRE or can be altered by resampling and look at related deviations. That is, does $\frac{X_n}{n}$ satisfy a large deviation principle under the quenched/annealed measures? How does the rate function depend on $\tau$?

For these questions associated to laws of large numbers and large deviations, a guiding heuristic is that cooling, as in \eqref{effective-cooling}, should preserve the static RWRE behaviour, because the walk repeatedly experiences long frozen stretches. This heuristic is often correct for ``regular'' cooling maps, but it can fail for general resampling maps. In particular:
\begin{itemize}
\item under broad cooling conditions, LLN and (annealed/quenched) LDP
match those of the static RWRE;
\item under more general resampling schemes, one can observe a rich variety of
effective speeds and deviation behaviours, driven by how $\tau$ allocates mass to
different block lengths.
\end{itemize}

\item \textbf{Fluctuations: mixtures, robustness, and new limits.}
Under which centering and scaling, does $X_n$ converge to a non-degenerate random variable? How do those scaling limits compare with those of the corresponding RWRE?

These questions concerning fluctuations are the most sensitive and display the richest cross-over.
Let $\rho=\frac{1-\omega(0)}{\omega(0)}$ and recall the usual RWRE parameter $s > 0$
defined implicitly  for right transient RWRE by
\begin{equation}\label{eq:s_parameter}
\bb{E}[ \rho^s] = 1.
\end{equation}
With a slight abuse of notation we let $s=0$ represent the recurrent Sinai regime corresponding to $\bb{E}[\log\rho]=0$. We organise the results according to method and phenomenology.
The summary picture is:
\begin{itemize}
\item \textbf{$s  \in [0,1) \cup (2,\infty)$.} Cooling produces non-trivial sub-diffusive new behaviour governed by how the resampling map distributes variance
across blocks; regular maps yield explicit cross-over scalings and limit points. With centering by the mean and scaling by the standard deviations, the limit points are mixtures of the limit law obtained for the underlying static regime: Sinai-Kesten laws, when $s = 0$, second-kind Mittag-Leffler distributions law when  $s \in (0,1)$, and standard Gaussian random variables, whose mixtures remain standard Gaussian, when $s >2$.
\item \textbf{$s\in(1,2]$}. For these values of $s$ variance scaling is no longer possible and the analysis becomes more subtle. When $s=2$ the limit points are possibly non-standard Gaussian distributions. When $s \in (1,2)$,  new non-Gaussian limits arise, identified as generalized tempered stable laws and related mixtures. 
\end{itemize}
\end{enumerate}
The case when $s=1$ remains open.
\subsection{Methods: block decomposition, mass profiles, and replacement principles}
\label{s:methods}

The analysis throughout the survey is organized around the decomposition
\eqref{eq:X_dec} and the fact that $(Y_k =  X_{(\tau(k)\wedge t)}-X_{(\tau(k-1)\wedge t)}, k \in \bb{N})$ are independent.
This reduces many questions about RWCRE to questions about \emph{weighted sums of
independent increments}.

\subsubsection*{(i) Blockwise reduction and ``mass profiles''}
A convenient way to encode the influence of $\tau$ is to measure the relative
contribution of each block to the relevant scale (mean, variance, or cumulant).
For fluctuations, one typical choice is the variance profile
\begin{equation}\label{eq:lambda_profile_generic}
\lambda_{\tau,n}(k) :=\frac{\text{Var}\big(Y_k\big)}{\text{Var}\big(X_n\big)}, \qquad 1\le k\le \ell_n-1,
\end{equation}
together with the analogous boundary weight for $\bar Y^{\,n}$. Regularity conditions can then be formulated as asymptotic statements about $\lambda_{\tau,n}$ (e.g.\ whether the boundary weight vanishes, or whether the sequence admits a non-trivial limit along subsequences). This is precisely the mechanism behind the Sinai--Kesten mixture description: limit points are governed by the limiting ``mass distribution'' of $\lambda_{\tau,n}$.

\subsubsection*{(ii) Cooling ergodic principles and strong laws}
In this paper we hold the understanding that ergodic statements are statements about the time averages of the process $X_t$ that arise from mixing on the configuration space. Therefore even though the process $X_t$ does not fit the framework of an i.i.d process nor of a Markov process and may not display stationary behaviour on the path space, we consider the statements about $X_t/t$ to be of an ergodic nature. To distinguish it from the classical framework of ergodic theory, we call it \textbf{cooling ergodic principles}.

For LLN-type results, one combines:
(a) ergodic/renewal input from the frozen RWRE inside each block, and
(b) concentration or moment bounds ensuring that the blockwise averages combine
according to the proportions dictated by $\tau$.
This viewpoint naturally leads to ``ergodic theorems under resampling'' and to
general strong laws for weighted sums of independent variables regulated by an
increment sequence, as in the ``game of mass'' philosophy explained in Section \ref{ssec:game_of_mass} below.

\subsubsection*{(iii) Large deviations via cumulants and block additivity}
For LDP, the decomposition allows one to express log-moment generating functions
of $X_n$ as sums of block contributions (plus a boundary term), so that one can
apply Cram\'er-type arguments once suitable control on cumulants is available.
A recurring theme is to identify when the boundary block is negligible and when
it creates an additional contribution that changes the effective rate.

\subsubsection*{(iv) Replacement schemes and robust limit theorems}
In regimes where the frozen RWRE has an explicit scaling limit (Sinai--Kesten,
stable, Gaussian), one often aims to \emph{replace} each block increment by its
asymptotic limit at the corresponding block length and then study the resulting
weighted sum. This replacement method works when the underlying RWRE exhibits convergence in $L^2$, that is, when $s \in [0,1) \cup (2, \infty)$.
In the remaining regimes, $s\in(1,2]$, the replacement
idea still plays a role but must be supplemented by additional inputs, such as control of tail asymptotics, refined truncations, or regime-specific stable/tempered-stable
technology.

\section{Homogenization for non-cooling maps}\label{sec:homogenization}

In this section, as a warm-up, we treat a simple case of resampling maps $\tau$ among the class of non-cooling maps for which \eqref{effective-cooling} is violated.

The resampling maps that do not satisfy \eqref{effective-cooling} have $\liminf_k T_k <\infty$, and are therefore referred to as \emph{non-cooling maps}. Among these non-cooling maps homogenization is ensured, under regularity condition, and the main asymptotic results can be obtained by standard large deviation techniques through a careful inspection of the scaled cumulant generating function (s.c.g.f.).

We now define the empirical measure of the $m$ first increments by $\nu_m := \frac{1}{m} \sum_{k = 1}^m \delta_{T_k}$ and assume that $\nu_m \to \nu_*$ in $L^1$, that is,
\begin{equation}\label{L1-convergence}
    \lim_{m \to \infty} \sum_{T \in \bb{N}} T|\nu_m(T)- \nu_*(T)| = 0.
\end{equation}
We further assume that the piece increments are negligible when compared to the elapsed time
\begin{equation}\label{non-cooling-bounded}
    \lim_k \; \frac{T_k}{\tau(k)}  = 0.
\end{equation}

Under the regularity conditions \eqref{L1-convergence} and \eqref{non-cooling-bounded}, RWCRE behaves as a homogeneous random walk. To see this, consider the scaled cumulant generating function
\[
J^*_m(a):=\frac{1}{m} \log \bb{E}[\exp(aX_{\tau(m)})]
\]
and note that if \eqref{L1-convergence} holds true then, using the notation in \eqref{eq:space_incr} and the indepedent decomposition in \eqref{eq:X_dec} , we can write
\begin{equation}\label{cumulant-convergence}
\begin{aligned}
J^*_m(a) &=  \frac{1}{m} \sum_{j = 1}^m \log \bb{E}[\exp(a Y_j)] = \sum_{T \in \bb{N}} \log \bb{E}[\exp(a Z_T)] \nu_m(T) \\
&\to \sum_{T \in \bb{N}} \log \bb{E}[\exp(a Z_T)] \nu_*(T) =:J^*(a) \in \bb{R},
\end{aligned}
\end{equation}
where the limit passage above follows from \eqref{L1-convergence}, the dominated convergence theorem, and the fact that $|\log \bb{E}[\exp(a Z_T)]| \leq |aT|$.

Now, we note that $D_{J^*}: = \{a: J^*(a) <\infty\} = \bb{R}$, $a \mapsto J^*(a)$ is smooth,
and  strictly convex.  It follows from the classical from Gärtner–Ellis Theorem, see \cite[Thm V.6]{dH00}, that $(X_{\tau(j)}/j,\, j \in \bb{N}) $ satisfies the Large Deviation Principle (LDP) with rate $j$ and rate function 
\[
J(x) := \sup_{a } \big[ax -J^*(a)\big].
\]
To obtain the LDP for $X_n$ we relate $n$ and $\tau(n)$, by noting that \eqref{L1-convergence} implies
\[
\frac{\tau(n)}{n} = \sum_{T \in \bb{N}} T \nu_n(T) \to \sum_{T \in \bb{N}} T \nu_*(T) =:\bar{T}<\infty.
\]
By \eqref{non-cooling-bounded}, it follows that 
$\lim_n \bar{T}_n/n =0$ and thus $\lim_n \frac{1}{n} \log \bb{E}[\exp(a Z_{\bar{T}_n})] =0$. In view of \eqref{cumulant-convergence}, we obtain that
\begin{equation}\label{I-cumulant-def}
\begin{aligned}
\frac{1}{n} \log \bb{E}[\exp(a X_n)] & = \frac{1}{n} \log \bb{E}[\exp(a X_{\tau(\ell_n)})] + \frac{1}{n} \log \bb{E}[\exp(a Z_{\bar{T}_n})]\\
&= \frac{\ell_n}{n} J^*_{\ell_n}(a) +\frac{1}{n} \log \bb{E}[\exp(a Z_{\bar{T}_n})]\\
&\to \frac{1}{\bar{T} }J^*(a) =: I^*(a).
\end{aligned}
\end{equation}
Now, we apply again Gärtner–Ellis Theorem, \cite[Thm V.6]{dH00}, to conclude that $(X_{n}/n, n \in \bb{N}) $ satisfies the LDP with rate $n$ and rate function 
\[
I(x) := \sup_{\theta} \big[x\theta - I^*(\theta)\big].
\]

Once established such an LDP with 
rate function for $X=(X_n, n \in \bb{N})$, as detailed below, the strong LLN follows, and provided further regularity on the map $\tau$, a Central Limit Theorem (CLT) for $X$ as well. 

\paragraph{LLN from LDP}
Let 
\begin{equation}\label{v-def-I}
v := (I^*)'(0) = \sum_{T} \bb{E}[Z_T] \nu_*(T)/\bar{T}.
\end{equation}
Since $I^*$ is strictly convex, it follows that $v$ is the unique value for which $I(v) = 0$. With the help of the Chernoff-bound we deduce that for any $\varepsilon>0$
\[
\limsup_n \frac{1}{n} \log \bb{P}(X_n/n\geq v + \varepsilon)\leq  -I(v+\varepsilon) <0,
\]
which implies that $\sum_n \bb{P}(X_n/n\geq v + \varepsilon) < \infty$. Similarly, we obtain that $\sum_n \bb{P}(X_n/n\leq v - \varepsilon) < \infty$ and therefore, by the first Borel-Cantelli lemma, we can conclude the strong LLN. 
\paragraph{CLT from the LDP}
In order to derive a CLT we impose stricter conditions than \eqref{L1-convergence} and \eqref{non-cooling-bounded}.
We assume that convergence of measure occurs in $L^2$
\begin{equation}\label{eq:CLT-moment}
\lim_m \sum_{T\in\bb N} T^2|\nu_*(T)- \nu_m(T)| = 0, 
\end{equation}
and that the increment sequence is such that the boundary terms are negligible under diffusive scaling, that is
\begin{equation}\label{boundary-negligibility}
    \lim_k \frac{T_k}{\sqrt{\tau(k)}} = 0.
\end{equation}
In order to recover the classical centering by $nv$, we further assume 
that the speed converges in the diffusive scale, that is
\begin{equation}\label{eq:speed-for-clt-rate}
{\lim_m \sqrt{m}\sum_{T\in\bb N} T|\nu_*(T) -\nu_m(T)| = 0.}
\end{equation}
We derive the CLT in three steps, first we obtain the CLT for sums of centered blocks $\chi_j : = Y_j - \bb{E}[Y_j]$, next we adjust the time from block time/count to ``physical time'', i.e. basic step count, finally we recover the centering by $nv$.

\medskip
\noindent\emph{Step 1: CLT at block times.}
Set $\xi_j:=Y_j-\bb{E}[Y_j]$. By construction, see \eqref{eq:space_incr} it follows that $(\xi_j)_{j\ge1}$ are independent, centered, and $
\text{Var}(\xi_j)=\text{Var}(Z_{T_j})$.
Let $S_m:=\sum_{j=1}^m \xi_j$ and $s_m^2:=\text{Var}(S_m)=\sum_{j=1}^m \text{Var}(Z_{T_j})$.
By~\eqref{L1-convergence} and~\eqref{eq:CLT-moment},
\begin{equation}\label{eq:variance-growth}
\frac{s_m^2}{m}=\sum_{T\in\bb N}\text{Var}(Z_T)\,\nu_m(T)\ \rightarrow\ 
\sum_{T\in\bb N}\text{Var}(Z_T)\,\nu_*(T)=:\sigma^2_* = (J^*)''(0).
\end{equation}
Moreover, condition in~\eqref{boundary-negligibility} implies the Lindeberg condition for the triangular array
$\{\xi_j/s_m:1\le j\le m\}$, hence,  by the Lindeberg--Feller CLT, we obtain
\begin{equation}\label{eq:CLT-block-times}
\frac{S_m}{s_m}\ \Rightarrow\ \mathcal N(0,1),
\qquad\text{and by \eqref{eq:variance-growth}}\qquad
\frac{S_m }{\sqrt{m}}\ \Rightarrow \mathcal N(0,\sigma^2_*).
\end{equation}

\medskip
\noindent\emph{Step 2: from block times to physical time.}
First let $\sigma^2 :=\sigma^2_*/\bar{T}$ and note that $\sigma^2=(I^*)''(0)$. Now, with $\ell_n$ as in \eqref{eq:ell_def}, 
we can bound the remainder term 
\[
R_n:=X_n-X_{\tau(\ell_n)},\quad \text{with } |R_n|\le T_{\ell_n+1}.
\]
It then follows from \eqref{boundary-negligibility}  that $R_n/\sqrt{\tau(\ell_n)} \to 0$ and since $\tau(n)/n \to \bar{T}$ we obtain from \eqref{eq:CLT-block-times} that 
\begin{equation}\label{CLT-homogenisation}
\begin{aligned}
\frac{X_n - \bb{E}[X_n]}{\sqrt{n}} &= \frac{\sqrt{\ell_n}}{\sqrt{n}}\frac{X_{\tau(\ell_n)} - \bb{E}[X_{\tau(\ell_n)}]}{\sqrt{\ell_n}} + \frac{R_n - \bb{E}[R_n]}{\sqrt{n}} \\
&\to \frac{1}{\sqrt{\bar{T}}} \mc{N}(0, \sigma^2_*) = \mc{N}(0,  \sigma^2_*/\bar{T}) = \mc{N}(0,  \sigma^2).
\end{aligned}
\end{equation}

\medskip

\noindent\emph{Step 3: Centering and scaling constants and the LDP.}
It follows from \eqref{v-def-I} and \eqref{eq:CLT-moment} that 
$0=\sum_{T\in\bb N} (\bb{E}[Z_T]- vT)\nu_*(T)$.
Therefore we have that
\begin{equation}
\begin{aligned}
\frac{\bb{E}[X_{\tau(m)}]-v\,\tau(m)}{m}
&=\sum_{T\in\bb N} (\bb{E}[Z_T] vT) \nu_m(T)
\\
&=\sum_{T\in\bb N}\big(\bb E[Z_T]-vT\big)\big(\nu_m(T)-\nu_*(T)\big).
\end{aligned}
\end{equation}
{Therefore with the help of \eqref{eq:speed-for-clt-rate} we obtain that} 
\begin{equation}\label{eq:mean-drift-block}
\begin{aligned}
\frac{\bb E[X_{\tau(m)}]-v\,\tau(m)}{\sqrt{m}}
&= \sqrt{m}\sum_{T\in\bb N}\big(\bb E[Z_T]-vT\big)\big(\nu_m(T)-\nu_*(T)\big) \to 0.
\end{aligned}
\end{equation}
By \eqref{boundary-negligibility} it follows that
$\frac{\bb{E}[X_n] - n v}{\sqrt{n}} \to 0$ and therefore we obtain the CLT with centering by the speed:
\begin{equation}\label{CLT-homogenisation-1}
\begin{aligned}
\frac{X_n - nv}{\sqrt{n}} &\Rightarrow \mc{N}(0, \sigma^2).
\end{aligned}
\end{equation}

We may relate the centering term and the variance of the CLT with $v$, the minimum of $I$ and its curvature. 
Indeed, to obtain an explicit centering term of the CLT, we know from \eqref{v-def-I} that $v = (I^*)'(0) =  \sum_{T} \bb{E}[Z_T] \nu_*(T)/\bar{T}$ is the only point for which $I(v) = 0$. Moreover, $1/I''(v) = (I^*)''(0) = \sigma^2_*/\bar{T} = \sigma^2$.
This gives the centering and scaling terms. If we also note that $v>0$ implies that $X_n$ is right-transient, $v<0$ implies that $X_n$ is left-transient, and $v=0$ implies, with the help of the CLT result, that $X_n$ is recurrent. 

In summary from the inspection of the LDP rate function we have determined all the classical long-term statements of $X$. This justifies the ``folklore" that
\begin{equation}\label{Folklore}
    \text{the LDP for $X$ encodes the asymptotic behaviours of $X$.}
\end{equation}
\begin{remark}[On the CLT: Lindeberg--Feller vs.\ Bryc-type criteria]
In the present homogenization regime the CLT is most naturally obtained from the
independent block decomposition and the Lindeberg--Feller theorem for triangular arrays.
This yields, after centering by $v= (I^*)'(0)$ and scaling by the diffusive rate $n^{-1/2}$,  a Gaussian limit with variance $\sigma^2=(I^*)''(0)$.

It is worth noting that there exists an alternative ``Bryc-type'' route to a CLT, which is more
analytic than probabilistic: one first proves a second-order expansion of the scaled
log-moment generating functions
\[
I_n^*(\theta):=\frac1n\log \bb{E}[e^{\theta X_n}]
\]
in a neighborhood of $0$, with sufficient uniformity to evaluate $I_n^*(t/\sqrt n)$.
More precisely, if $I_n^*(\theta)\to I^*(\theta)$ for $|\theta|$ small and if
\[
n\Big(I_n^*\Big(\frac{t}{\sqrt n}\Big)-I^*\Big(\frac{t}{\sqrt n}\Big)\Big)\to 0
\quad\text{for each fixed $t$},
\]
then a standard moment generating function (m.g.f.) argument implies
\[
\frac{X_n-n(I^*)'(0)}{\sqrt n}\Rightarrow \mathcal N\big(0,(I^*)''(0)\big).
\]
In general, a rate $n$ LDP alone does not imply such a CLT; the additional local
uniformity above is the key extra input.
\end{remark}

In what follows we shall examine the long-term behaviours for $X_n$ for resampling maps that are not as simple as for those fulfilling the regularity conditions described in this section.
Special role will be given to cooling maps, see \eqref{effective-cooling},  and other non-regular variants (even with $\liminf_j T_j < \infty$) as opposed to the maps where  homogenization occurs or the static ``frozen behaviour" is in force, $\tau(1) =\infty$.
Interestingly, while for the 
resampling maps treated in this section all the results for $X$ can be deduced from the rate function, this approach is no longer possible outside this regular class ensuring homogenization as the corresponding rate function is no longer analytic and local uniformity does not hold. 

\section{Recurrence and transience properties in 1d}\label{sec:recurrencevstransience}

In this section, we discuss for 1d RWCRE how resampling maps
can affect the celebrated recurrence versus  criterion due to Solomon \cite{Sol75} for RWRE.
In fact, a general recurrence criterion for any resampling maps remains 
an open problem on the integers for various subtle reasons that we next highlight.

While a full recurrence/transience classification for arbitrary resampling maps is too broad to expect without additional structure, the RWCRE setting naturally suggests concrete and tractable open directions. A particularly appealing one is to identify the minimal cooling rate that still preserves the recurrence/transience dichotomy. 

Let us start with a few remarks. Since for any event $A$,
\begin{equation}
    \bb{P}(A)=1 \quad \iff \quad P_0^{\textcolor{black}{\bar{\omega}},\tau}(A)=1 \quad \mu^\bb{N}\text{-a.s}.,
\end{equation}
we do not distinguish between quenched and annealed statements when it comes to zero-one laws. In particular, despite the fact that RWCRE lack an invariant measure in path space and therefore is not \emph{ergodic} as defined in the standard dynamical system sense (see also Section~\ref{sec:ergodic}), it is still  \emph{tail-trivial}, i.e., all events in the tail sigma-algebra have probability zero or one. This is captured in the following lemma whose proof its standard. 
\begin{lemma}[Tail triviality for RWCRE]\label{lem:kolmogorov01-cooling}
Given a pair $(\alpha,\tau)$, let 
$$\mc{T} = \bigcap_{n\in \bb{N}} \sigma(X_m, m >n)$$ be the tail sigma algebra generated by the associated RWCRE $X$. If $A \in \mc{T}$ then $\bb{P}(A) \in \{0,1\}$.
\end{lemma}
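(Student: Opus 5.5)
The plan is to reduce the statement to Lévy's $0$--$1$ law along the resampling times, combined with a coupling argument that removes the dependence on the starting point. Throughout, $\tau$ is fixed, i.e.\ deterministic, since the lemma is stated for a given pair $(\alpha,\tau)$, and $\bb{P}$ is the annealed law $P_0^{\mu,\tau}$. Write $\mathcal{F}_t=\sigma(X_m, m\le t)$. For $k\in\bb{N}$ let
$$W_k=\big(X_{\tau(k-1)+m}-X_{\tau(k-1)},\ 0\le m\le T_k\big)$$
be the $k$-th block path.

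\emph{Step 1 (block independence).} We first check that, under the annealed law, the block paths $(W_k)_{k\ge1}$ are independent. Moreover, the block path $W_{k+1}$ and all later ones are independent of $\mathcal{F}_{\tau(k)}$. The reason is that block $k+1$ uses a fresh environment $\omega_{k+1}$, independent of everything before it. Since $\mu=\alpha^{\bb{Z}}$ is shift invariant, the annealed law of the path $W_{k+1}$ started from $X_{\tau(k)}=x$ does not depend on $x$. Consequently, for $A\in\mc{T}$ we have $A\in\sigma(X_m, m>\tau(k))$, and
$$\bb{P}(A\mid \mathcal{F}_{\tau(k)})=h_k(X_{\tau(k)}),\qquad h_k(x):=\bb{P}\Big(\big(x+\textstyle\sum_{j=k+1}^{\ell}W_j\big)\in A\Big).$$
Here the event on the right means that the concatenated future path, started from $x$, lies in $A$. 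By Lévy's upward theorem, $h_k(X_{\tau(k)})\to\Ind{A}$ almost surely as $k\to\infty$.

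\emph{Step 2 (constancy of $h_k$).} The parity of $X_{\tau(k)}$ is deterministic, equal to that of $\tau(k)$. So it suffices to show that $h_k(x)=h_k(x+2)$ for every $x$. Once this holds, $h_k(X_{\tau(k)})$ is almost surely constant and hence equals $\bb{P}(A)$. Letting $k\to\infty$ then gives $\Ind{A}=\bb{P}(A)$ almost surely, so $\bb{P}(A)\in\{0,1\}$.

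To prove $h_k(x)=h_k(x+2)$, I would couple two copies of the future evolution, one started at $x$ and one at $x+2$, and make them meet. After meeting, the two copies run identically, and $A$ is insensitive to any finite initial segment. The coupling is a Mineka/Ornstein-type construction built block by block. Group consecutive blocks so that each group has total length at least $2$. By uniform ellipticity \eqref{eq:ue}, the annealed law of the displacement $D$ over one group satisfies $\bb{P}(D=y)\ge c\,\bb{P}(D=y+2)$ and $\bb{P}(D=y+2)\ge c\,\bb{P}(D=y)$, with $c>0$ depending only on $\mathfrak{c}$. For example, for group length at least $2$, a path ending at $y$ can be modified in one pair of steps to end at $y\pm2$, at a cost bounded by $\mathfrak{c}^2/(1-\mathfrak{c})^2$ and a bounded combinatorial factor. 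This yields, uniformly over groups, a coupling of $D$ and $D'$ under which $D'-D\in\{-2,0,2\}$, with $D'-D=\pm2$ each having probability at least some $\delta>0$, the law of $D'-D$ symmetric, and $D'=D$ otherwise. The difference of the two copies at group ends is then a symmetric lazy random walk on $2\bb{Z}$ whose non-lazy probability is bounded below. Such a walk hits $0$ almost surely, because there are infinitely many groups, even though the block lengths need not be identically distributed. A time change by the number of non-lazy steps reduces this to simple random walk on $2\bb{Z}$.

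\emph{Main obstacle.} I expect the main difficulty to be making the coupling in Step 2 precise, and specifically the uniform two-sided comparison $\bb{P}(D=y)\asymp\bb{P}(D=y+2)$. This comparison is needed for all relevant $y$, or at least enough of them to produce a symmetric coupling with a uniform $\delta$, regardless of how long the groups are. If that comparison is awkward to obtain for long blocks, I would instead use only the first two steps of each group for the coupling and let the rest of the group evolve identically in both copies, reusing the same environment and shifting by the current difference. This requires care, because the environment is shared within a block, so the "identical evolution" must be understood as an identical annealed increment law and not as the same quenched path. That in turn uses the fact that block displacements depend only on the fresh environment, as established in Step 1.
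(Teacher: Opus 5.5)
Your architecture is sound, and it goes beyond what the paper offers: the paper gives no proof and only calls the lemma standard. A direct appeal to Kolmogorov's $0$--$1$ law for the independent blocks $(W_k)$ would not suffice, because $\sigma(X_m,m>\tau(k))$ involves $X_{\tau(k)}$, which is a function of $W_1,\dots,W_k$. Your reduction via L\'evy's upward theorem to constancy of $h_k$ on the parity class of $\tau(k)$, proved by a successful coupling, is the right way to handle this.

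The step that fails as written is the uniform two-sided comparison $\bb{P}(D=y)\asymp\bb{P}(D=y+2)$: it is false. Already for simple random walk, $P(S_T=T)/P(S_T=T-2)=1/T$. The same happens for the annealed law of one block of length $T$. There $\bb{P}(D=T)=m^T$ with $m:=\int\omega(0)\,\alpha(d\omega)$. On the other hand, consider the $T-3$ paths whose only left step is taken at time $j$ from site $j\in\{1,\dots,T-3\}$. By Jensen and \eqref{eq:ue}, each has annealed probability at least $\mathfrak{c}\,m^{T-1}$. Hence $\bb{P}(D=T)/\bb{P}(D=T-2)\le m/((T-3)\mathfrak{c})\to0$. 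Your ``bounded combinatorial factor'' is really of order $T$, so no uniform $c$ exists.

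The comparison is not needed, though. Mineka's construction only requires a uniform lower bound on the overlap of the group-displacement law with its shift by $2$. This follows from the quenched Markov property after the first two steps. Let $q_T(y):=P^\mu_0(Z_T=y)$. For $\mu$-a.e.\ $\omega$ we have $P^\omega_0(Z_2=0)\ge2\mathfrak{c}^2$ and $P^\omega_0(Z_2=2)\ge\mathfrak{c}^2$. Together with shift invariance of $\mu$, these give
\[
q_T(y)\ \ge\ \mathfrak{c}^2\,q_{T-2}(y)+\mathfrak{c}^2\,q_{T-2}(y-2),\qquad y\in\bb{Z},\ T\ge2.
\]
So $q_T=\mathfrak{c}^2q_{T-2}+\mathfrak{c}^2q_{T-2}(\cdot-2)+(1-2\mathfrak{c}^2)\rho_T$ for some probability measure $\rho_T$. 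For a group of total length at least $2$, the same holds with $q_{T-2}$ replaced by the annealed law of the displacement over the rest of the group.

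Now take $R\sim q_{T-2}$ and set:
\begin{itemize}
\item $(D,D')=(R,R+2)$ with probability $\mathfrak{c}^2$;
\item $(D,D')=(R+2,R)$ with probability $\mathfrak{c}^2$;
\item $D=D'\sim\rho_T$ otherwise.
\end{itemize}
Both marginals are correct, and $D'-D$ is symmetric and nonzero with probability $2\mathfrak{c}^2$, uniformly in $T$.

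This is the rigorous version of your fallback. The ``identical evolution'' is realised as a common mixture component, not as a shared or shifted quenched environment. A shared environment would distort the marginals, since the first two steps bias $\omega$ at the sites they visit. Finally, lift the coupling from displacements to block paths by sampling each path conditionally on its displacement. Then, once the difference walk hits $0$, the two copies share all later blocks. With these changes the argument is complete.
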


We know from \cite{Sol75} that by analysing the related quenched potential
RWRE is recurrent if and only if $\bb{E}[\log\rho] = 0$ and right-transient if and only if $\bb{E}[\log\rho] < 0$.  We thus  say that $\alpha$ is \emph{recurrent} or \emph{right-transient} when $\bb{E}[\log\rho] = 0$, respectively, $\bb{E}[\log\rho]< 0$. For RWCRE the classification of recurrence versus transience is more delicate, because it also depends on the resampling map $\tau$ and therefore, under the quenched measure $P^{\bar{\omega},\tau}$ the Markov process $X_n$ is not time-homogeneous. 
In what follows we say that $(\alpha,\tau)$ is \emph{recurrent} or \emph{transient}, respectively, when
\begin{equation}
\bb{P}(X_n=0 \,\, \text{i.o.})=1 \quad \text{or} \quad \bb{P}(X_n=0 \,\, \text{i.o.})=0.
\end{equation}
We say that $(\alpha,\tau)$ is \emph{right transient} or \emph{left transient} when
\begin{equation}
\bb{P}\left(\liminf_{n\to \infty} X_n=\infty \right)=1 \quad \text{or} \quad \bb{P}\left(\limsup_{n\to \infty} X_n=-\infty \right)=1.
\end{equation}
By tail triviality, $\{0,1\}$ are the only possible values for the above events.

A key feature in the study of recurrence and transience for RWCRE is the analysis of the drift over the time increments  $\bb{E}[X_{\tau(j)}-X_{\tau(j-1)}]$ for $j \in \bb{N}$. While, for RWRE, it is the quantity $\bb{E}[\log \rho]$ 
rather then the local drift that determines the criterion allowing for cases in which e.g. the drift at a particular point can be positive and the process can still be left-transient.

The main general result as far as recurrence of the RWCRE is concerned is captured by the following theorem, derived in \cite[Thm. 1]{AveChidCosdHol22}. Restricted to cooling maps as defined in \eqref{effective-cooling}, the theorem offers subtle sufficient conditions under which the Solomon criteria are not affected by the resettings due to $\tau$.

 We say that a measure $\alpha$ with support in $(0,1)$ is \emph{symmetric} when for all $a,b \in (0,1)$, $\alpha([a,b]) = \alpha([1-b,1-a])$, that is $\alpha$ is symmetric around $1/2$.
The key general statement on recurrence for RWCRE is \cite[Thm.~1.7]{AveChidCosdHol22}. In the cooling regime \eqref{effective-cooling}, it provides sufficient conditions under which resampling along~$\tau$ does not alter the Solomon criterion.

\begin{theorem}[{\bf Stability of recurrence/transience, \cite[Thm~1.7]{AveChidCosdHol22}}] \label{thm:RvT}
\text{}\\ \vspace{-.5cm}
\begin{itemize}
\item[{\rm(a)}] If $\alpha$ is right-transient, then $(\alpha,\tau)$ is right-transient for all cooling maps $\tau$, that is, such that \eqref{effective-cooling} is valid.

\item[{\rm(b)}] If $\alpha$ is recurrent, then $(\alpha,\tau)$ is recurrent when
\begin{equation} \label{sufficient_rec}
\liminf_{n\to\infty} \abs{\bb{E} \left[ \frac{X_n}{\sqrt{\mathsf{Var} (X_n)}} \right]} = 0.
\end{equation}
The latter holds for all symmetric $\alpha$ and all $\tau$,
and also for all non-symmetric $\alpha$ when $\tau$ is such that
\begin{equation}\label{sufficient_fast}
\liminf_{k\to\infty} \frac{1}{k^\gamma} \log T_k >0 \text{ for some } \gamma > \frac34.
\end{equation}
\end{itemize}
\end{theorem}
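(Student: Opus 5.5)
For part (a), I would start from the block decomposition \eqref{eq:X_dec} and prove right-transience in two steps. The first step is that the block endpoints drift to $+\infty$. The second is that excursions inside a block cannot bring the walk back to the origin infinitely often.

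For the first step, I use that a right-transient RWRE has, under the annealed law, $Z_T\to+\infty$ a.s. This comes from Solomon's criterion. It also has good tail control of backtracking: by uniform ellipticity \eqref{eq:ue} and $\bb{E}[\log\rho]<0$, the probability that the RWRE ever goes below $-x$ decays exponentially in $x$. This follows from the quenched hitting probabilities $\sum_{k}\prod\rho$, together with a Chernoff bound on the potential and a union over the environment near the origin. Consequently $P^\mu_0(\min_{t\ge0} Z_t\le -x)\le Ce^{-cx}$ uniformly.

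Since $T_k\to\infty$, $Y_k$ is distributed as $Z_{T_k}$ and satisfies $P(Y_k\le M)\to0$ for every fixed $M$. The $Y_k$ are independent. It is enough to show $\sum_{k\le m}Y_k\to\infty$ a.s. and that $\min_{t\in[\tau(k-1),\tau(k)]}(X_t-X_{\tau(k-1)})$ is stochastically bounded below by $-G_k$, where the $G_k$ are i.i.d. with exponential tails.

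To get the first statement, I would use that $Y_k\ge -G_k$ with $\bb{E}G_k<\infty$, and that $Y_k\ge M$ with probability tending to one. A truncated strong law for the independent variables $Y_k\wedge M$ then gives $\liminf \frac1m\sum_{k\le m}Y_k\ge M(1-o(1))-\bb{E}G$ for every $M$, so the partial sums grow linearly in $m$. A Borel–Cantelli argument shows $G_k=O(\log k)$ a.s. Hence $\liminf_n X_n\ge \liminf_k\big(\sum_{j<k}Y_j-G_k\big)=\infty$. By Lemma \ref{lem:kolmogorov01-cooling} this is a probability-one statement.

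For part (b), I would first show that \eqref{sufficient_rec} implies recurrence. The idea is a CLT along a subsequence $n_i$ with $\bb{E}[X_{n_i}]/\sqrt{\Var(X_{n_i})}\to0$. Recurrent RWRE blocks have $\Var(Z_T)\asymp(\log T)^4$ and Sinai–Kesten limits. The normalized $X_{n_i}$ therefore has tight limit points, which are mixtures of centred Sinai laws plus Gaussian pieces, symmetric in the right sense. This gives $\liminf P(X_{n_i}>0)\ge c$ and $\liminf P(X_{n_i}<0)\ge c$ with $c>0$.

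The events $\{X_n>0 \text{ i.o.}\}$ and $\{X_n<0 \text{ i.o.}\}$ are tail events. By Fatou they have positive probability, hence probability $1$ by Lemma \ref{lem:kolmogorov01-cooling}. Because the walk is nearest-neighbour, it then crosses $0$ infinitely often, and parity ensures it visits $0$ infinitely often.

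The symmetric case is immediate, since $\bb{E}[X_n]=0$ by the symmetry $\omega\mapsto 1-\omega$ reflected in space.

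The main obstacle is the non-symmetric case under \eqref{sufficient_fast}. There I must compare $\bb{E}[X_n]=\sum_k\bb{E}[Z_{T_k}]+\bb{E}[Z_{\bar T^n}]$ with $\sqrt{\Var(X_n)}$. My plan is to use that $\bb{E}[Z_T]$ is bounded, or grows slower than $(\log T)^2$, for recurrent $\alpha$. The intended route is through the Sinai valley approximation $Z_T\approx b_T$, with $\bb{E}[b_T]$ controlled by the symmetry of the limiting Brownian potential up to corrections of order $(\log T)^{2-\delta}$. Meanwhile $\Var(X_n)\ge c\sum_k(\log T_k)^4$.

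Choosing $n=\tau(k)$ and using $\log T_k\ge k^\gamma$, the ratio is at most $\sum_j(\log T_j)^{2-\delta}\big/\big(\sum_j(\log T_j)^4\big)^{1/2}$. I would then show this tends to zero along a subsequence precisely when $\gamma>3/4$. I expect this to require a quantitative error bound of the form $|\bb{E}Z_T|\le C(\log T)^{1/2+\epsilon}$. Obtaining that bound through KMT-type coupling of the potential is the delicate step.
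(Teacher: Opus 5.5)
The flawed part is the non-symmetric case under \eqref{sufficient_fast}. Everything else holds: part (a), deriving recurrence from \eqref{sufficient_rec} (centred, symmetric, non-degenerate limit points from Theorem~\ref{thm:rlpts}, then reverse Fatou and Lemma~\ref{lem:kolmogorov01-cooling}), and the symmetric case. The non-symmetric case is the only quantitative part of (b), and there your plan is both miscalibrated and unproved.

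Suppose $|\bb{E}[Z_T]|\le C(\log T)^{\beta}$ with $\beta<2$. Put $A_k:=\sum_{j\le k}(\log T_j)^4$, so that $\Var(X_{\tau(k)})\ge cA_k$. H\"older gives $\sum_{j\le k}(\log T_j)^{\beta}\le k^{1-\beta/4}A_k^{\beta/4}$. Hence, at $n=\tau(k)$, the ratio in \eqref{sufficient_rec} is at most $Ck^{1-\beta/4}A_k^{-(2-\beta)/4}\le C'k^{1/2-\gamma(2-\beta)}$, using $A_k\ge ck^{4\gamma+1}$. This is sharp when $\log T_j\asymp j^{\gamma}$ and the bound on $\bb{E}[Z_T]$ has the right order.

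So this route needs $\gamma(2-\beta)>1/2$. The threshold $\gamma>3/4$ is exactly what $\beta=4/3$ produces. Your target $\beta=\frac12+\epsilon$ would instead give $\gamma>1/3$, so your claim that the ratio vanishes ``precisely when $\gamma>3/4$'' does not follow from your own estimate. The qualitative fact $\bb{E}[Z_T]=o((\log T)^2)$, which does follow from $L^2$-convergence to the symmetric Kesten law, only gives a ratio $o(\sqrt{k})$ by Cauchy--Schwarz. A polynomial rate in $\log T$ is therefore indispensable.

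More importantly, you never establish the key estimate, and the one you aim for is not a realistic target. When $\alpha$ is not symmetric, the increments $\log\rho$ are typically skewed. The valley bottom $b_T$ is a functional of $\asymp(\log T)^2$ potential steps. The first Edgeworth-type correction to its law, relative to the centred Brownian limit, is of relative order $(\log T)^{-1}$ and is driven by the third cumulant of $\log\rho$; no symmetry forces it to vanish. Heuristically, then, $\bb{E}[Z_T]$ is of order $\log T$, so neither boundedness nor $(\log T)^{1/2+\epsilon}$ should be expected.

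What you actually need is a proved bound $|\bb{E}[Z_T]|=O((\log T)^{4/3})$, i.e.\ a rate $((\log T)^2)^{-1/3}$ in $\bb{E}[Z_T]/(\log T)^2\to0$. This requires two inputs that you only name:
\begin{itemize}
\item[(i)] A quantitative comparison of $\bb{E}[b_T]$ with the centred Brownian valley bottom. A KMT sup-norm coupling does not transfer directly, because the bottom and the selection of the valley of depth $\log T$ are argmin-type, non-Lipschitz functionals. You must convert the coupling error into a location error by controlling near-ties of minima and depths.
\item[(ii)] An $L^1$ bound of the same order on $Z_T-b_T$. For example, localisation estimates combined with uniform $L^p$ bounds on $Z_T/(\log T)^2$ could absorb the bad events.
\end{itemize}
Without these, the non-symmetric half of (b) remains unproved.
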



As anticipated, a complete recurrence criterion for {general resampling maps}, or even for general cooling maps, is still lacking. To {illustrate} the subtleties involved, we refer the reader to the discussion and counterexamples in \cite{AveChidCosdHol22}{,} see Ex.1 and Ex.2 therein. {Their main messages may be summarised by the following two bullets, which highlight the limitations of Theorem~\ref{thm:RvT}(a) and Theorem~\ref{thm:RvT}(b), respectively.}

\begin{itemize}
\item\emph{Right-transient can turn into left-transient or recurrent}:
There exist a right-transient environment law $\alpha$ and two resampling maps $\tau'=\tau'(\alpha)$ and $\tau''=\tau''(\alpha)$ whose increment sequences are Cesàro-divergent, in the sense that
\begin{equation}\label{Cesarocooling}
\lim_{\ell\to\infty}\frac{1}{\ell}\sum_{k=1}^{\ell} T_k=\infty,
\end{equation}
while nevertheless $(\alpha,\tau')$ is left-transient and $(\alpha,\tau'')$ is recurrent.

\item \emph{Recurrent can turn into transient}:
There exist a recurrent $\alpha$ and a cooling map $\tau=\tau(\alpha)$ such that \eqref{sufficient_fast} is not valid,
for which $(\alpha,\tau)$ is transient.
\end{itemize}

We conclude this section with a few remarks concerning the methods used in \cite{AveChidCosdHol22} to derive Theorem \ref{thm:RvT} above when compared with the methods used in the proof of the classical statements for RWRE. The proof of the latter presented in \cite{Sol75} is obtained from the analysis of a quenched harmonic function associated to the underlying birth and death chain,
but it is worth mentioning that it can be reframed in the semi-martingale approach of  \cite{FayMalMen95}. 

In RWCRE the quenched law $P^{\bar\omega,\tau}$ is time-inhomogeneous due to resampling at times $\tau(k)$. Within block $k$ the walk evolves as an ordinary RWRE in the fixed environment $\omega_k$, and the corresponding quenched scale function $S_{\omega_k}$ 
is harmonic for the within-block kernel.
At a refreshing time, however, the kernel changes abruptly and the harmonicity relation for $S_{\omega_k}$ is in
general incompatible with the next block kernel. 
For this reason, the recurrence/transience results for RWCRE in \cite{AveChidCosdHol22} are proved by exploiting the decomposition
at refreshing times: the displacement over block $k$ is an RWRE increment in environment $\omega_k$, and under the
annealed law the sequence of block increments is tractable. This block structure makes it possible to prove, for instance, stability of directional transience under \emph{cooling} $T_k\to\infty$:
long blocks are close to the ``unperturbed'' RWRE behaviour, so the effect of refreshing becomes asymptotically
negligible. In contrast, when $T_k$ does not diverge (or only diverges in a weaker Ces\`aro sense as in \eqref{Cesarocooling}), the refreshing
mechanism can genuinely alter the recurrence classification, and the appropriate tools become annealed
drift/ergodic/concentration estimates for block displacements rather than a single quenched harmonic function.

 \section{Cooling Ergodic tools for LLN \& LDP}\label{sec:ergodic}
This Section treats the law of large numbers (LLN) and large deviation principle (LDP) for the average displacement  side by side because, in
dynamic/resampled environments, the same block decomposition and the same scale control both. In RWCRE, one naturally decomposes the path at the resampling times $\tau(k)$, so that additive observables (such as displacement, log-moment generating functions)
become weighted sums of block contributions plus a boundary term.
Once this decomposition is in place, essentially the same arguments that yield almost sure control of block averages
also yield almost sure convergence of block log-moment generating functions; from there, an LDP often comes
``as a bonus'' of the approach.

At the same time, there is an opposite (and equally instructive) viewpoint. A common piece of ``folklore'', recall \eqref{Folklore}, is that an LDP, when sufficiently regular, encodes much finer information than the LLN: it prescribes not only the limiting speed but also the exponential cost of deviations and---under additional smoothness and uniformity assumptions---can even imply fluctuation statements (e.g.\ via Bryc-type arguments connecting Laplace asymptotics to central limit behaviour). In that sense, one may start from large deviations as the ``master object'' and see the LLN as its most visible
corollary: the LLN would correspond to the unique minimiser of the rate function. 
For RWRE, this is only the case in the Sinai/recurrent and the transient sub-ballistic regimes, corresponding to ``$s=0$" and $s\in(0,1)$, respectively, with $s$ as in \eqref{eq:s_parameter}. In the positive speed regime $s>1$, there is no hope to recover the fluctuations due to the flat piece of the rate function $I$, $I(u) = 0$ for $u \in [0,v]$, which forbids us to apply LDP estimates to derive the LLN and makes the rate function non-smooth in violation of the conditions required for Bryc's CLT arguments to hold. Still, for RWCRE under the Sinai regime, given that there is a cooling map with $T_k \to \infty$ for which the fluctuation is Gaussian, one could hope to capture the effect of the cooling map in the LDP.

This dual narrative is useful here because it clarifies what our ergodic techniques are actually producing:
they are not only proving a strong law, but also constructing the analytic object behind the LDP, namely the limiting
scaled cumulant generating function (s.c.g.f.)
\begin{equation}\label{eq:ch4-intro-scgf}
\Lambda(\theta)\;=\;\lim_{n\to\infty}\frac{1}{n}\log \bb{E}\left[e^{\theta X_n}\right],
\end{equation}
whenever the limit exists in the relevant sense (quenched or annealed). Once $\Lambda$ is identified and has
appropriate regularity, the candidate rate function is its Legendre--Fenchel transform
\begin{equation}\label{eq:ch4-intro-legendre}
I(x)\;=\;\sup_{\theta\in\bb{R}}\{\theta x-\Lambda(\theta)\}.
\end{equation}
In Appendix \ref{app:LDP} we show a proof of LDP does not go through a standard inversion argument. Indeed, the proof we present follows is more artisanal, in the sense that it controls error terms and proves the upper and lower bound one at the time.

\textbf{Quenched Vs Annealed analysis.} A second reason to treat LLN and LDP together is that beyond the existence of two LLNs (the weak and the strong) we can consider, and must distinguish two
large deviation principles.
In a dynamic random environment there is ``randomness of the walk given the environment'' and ``randomness of the environment itself''.
The \emph{quenched} viewpoint conditions on the whole environment (and, in RWCRE, on the entire resampling schedule
and the refreshed environments), proving almost sure statements for typical realisations.
The \emph{annealed} viewpoint averages over the environment, producing a generally different large deviation
principle, with a potentially different rate function because rare deviations of the walk can be helped (or hindered)
by atypical environments.

The role of \emph{concentration} depends strongly on which limit one seeks.
In our setting, concentration is a central ingredient for the \emph{strong} LLN and for the \emph{quenched} LDP:
it is the bridge that upgrades convergence in expectation (or in probability) into almost sure statements,
and it controls the errors in block-by-block Laplace asymptotics along a fixed environment realisation.
By contrast, the weak law of large numbers and the annealed LDP can be obtained by softer arguments,
since averaging over the environment
allows one to work directly at the level of expectations.
For completeness, we provide in this Section a proof of the annealed LDP, a new result in this literature.

It is worth emphasising that, in many models, the annealed LDP is \emph{harder} than the quenched LDP.
Heuristically, annealing gives the system the freedom to perform a deviation not only by an unusual trajectory
but also by sampling an unusual environment, and the optimisation typically involves an entropic cost
(through a relative-entropy term) balancing ``unlikely displacement'' against ``unlikely environment''.
Remarkably, RWCRE exhibits the opposite behaviour: the annealed LDP turns out to be simpler than one might expect,
essentially because the resampling structure decouples the problem into independent blocks under the annealed law.

Finally, it is instructive to place these results in historical perspective.
For one-dimensional RWRE, quenched large deviations were established first, in the work of Greven and
den Hollander \cite{GdH94}, while the annealed large deviations were obtained later through a sequence of works
by Zeitouni, Dembo, Peres, Comets, Gantert \cite{CGZ00} \cite{DPZ96} in the late 1990s and early 2000.
This chronology reflects the general phenomenon mentioned above: annealing introduces an additional optimisation over environments, and the corresponding rate function may contain a non-trivial entropic contribution.

We also contrast our approach with the subadditive ergodic theorems that are effective in stationary settings, with the Liggett--Kingman framework, \cite{Lig85}, used in RWRE by Ramirez and others, see \cite{CDRRS13}.
In resampled environments, stationarity is fundamentally broken: the ``environment seen from the particle'' changes
law at the refreshing times, making the process time-inhomogeneous.
For this reason, subadditive ergodic theorems do not apply directly, and one must replace them by a ``cooling'' ergodic
theorem tailored to weighted block concatenations.
The Section is devoted to developing precisely this replacement: ergodic limits for block sums and for block Laplace
transforms, robust enough to deliver the SLLN and both quenched and annealed LDP.

\textbf{Outline of the Section:} We start in Section \ref{WLLNannLDP} proving that for cooling maps $\tau$ the asymptotic speed and the associated annealed LDP of rate $n$ is the same as for RWRE, see Proposition \ref{prop:cooling-scgf}. In reality, we state general criterion see Theorem \ref{annLDPthm} which ensures when annealed LDP holds for RWCRE, in particular the persistence of the rate function in Proposition \ref{prop:cooling-scgf} and the LDP treated in Section \ref{sec:homogenization} can be seen as corollaries of the general Theorem \ref{annLDPthm}. We then move beyond the RWCRE specific framework and discuss in Section \ref{sec:gradual-sums} cooling ergodic theorems, first derived in \cite{AveChidCosdHol19,AvedCos24game}, to prove generic weak and strong convergence for the emerging weighted incremental sum, see \eqref{gradual}, in case of \emph{centred} random variables. For the RWCRE model, these results can on the one hand be used to show persistence of the limiting speed and of the quenched rate $n$ LDP of the static RWRE when $\tau$ is cooling as in \eqref{effective-cooling}, on the other hand, they can be used to analyse the emerging asymptotic speed for more general resampling maps, for which the above mentioned persistence is not guaranteed and the existence and characterization of the corresponding limiting value depends on the mean behaviour and the stability analysis of the chosen map $\tau$. The latter delicate problem is genuinely related to non-centred variables and is an instance of what we refer to as the game of mass which we discuss in Section \ref{ssec:game_of_mass}.

\subsection{Weak LLN for cooling maps and general annealed LDP}
\label{WLLNannLDP} 
When the considered resampling map $\tau$ is cooling as in \eqref{effective-cooling}, as anticipated, the empirical speed $X_n/n$ converges in probability (WLLN) and almost surely (SLLN) to the same limit as for RWRE. The proof of this convergence \emph{in probability} was first given in \cite[Sec. 3]{AdH17springer} by as soft argument, which we briefly recall, based on Markov's inequality and a so-called Toepliz-lemma. 
First we need some notation. For a fixed $\tau$ and underlying family $\mathfrak{X} = (X^{(j)}, j \in \bb{N})$ of i.i.d copies of annealed RWRE, let $X_t = X_t(\mathfrak{X},\tau)$ be the process defined by \eqref{cooling-general-tau}, let $v_T = \bb{E}[X^{(1)}_T/T]$, and let
\begin{equation}\label{gamma-def}
\gamma_{k,n}: = \frac{T_{k,n}}{n}, \quad \text{ where } T_{k,n} := (\tau(k) \wedge n) - (\tau(k-1) \wedge n).
\end{equation}
Now by the LLN for RWRE  we have that $v_T \to v_*$. For $\varepsilon>0$ there is $A = A(\varepsilon)$ such that $T>A$ implies $|v_n - v_*|<\varepsilon$.
Note that $\sum_k \gamma_{k,n} = 1$ and that \eqref{effective-cooling} implies that there is $n_0$ such that $\sum_{k} \bbm{1}_{\{T_{k,n}<A\}} \gamma_{k,n} < \varepsilon$ for $n>n_0$. Since $|v_T - v_*|\leq 2$ it follows from  Markov's inequality that 
\begin{equation}\label{toepliz-here}
    \begin{aligned}
        \bb{P}(|X_n  - \bb{E}[X_n] |>n\varepsilon)&\leq \frac{\bb{E}\big[|X_n - \bb{E}[X_n]|\big]}{n\varepsilon}
        \leq \sum_{k = 1}^{\ell} \gamma_{k,n} |v_{T_{k,n} - v_*}|\\
        &\leq \sum_{k} \bbm{1}_{T_k \leq A} 2\gamma_{k,n}  +  \sum_{k} \bbm{1}_{T_k > A} \gamma_{k,n} \varepsilon < 3 \varepsilon
    \end{aligned}
\end{equation}
Since $\varepsilon>0$ is arbitrary, and $\bb{E}[X_n]/n \to v_*$ it follows that $X_n/n$ converges in probability to $v_*$.

To prove the SLLN, namely, the same statement almost surely rather than in probability, the argument is more delicate 
and it is discussed in Section \ref{sec:gradual-sums}. 
In the next theorem, we prove an annealed LDP for the empirical speed under general resampling maps. As a consequence, Corollary \ref{prop:cooling-scgf} shows that, when one specialises to cooling maps, RWCRE exhibits the same annealed exponential-scale slowdown behaviour as the underlying RWRE.


\begin{theorem}[{\bf Annealed rate $n$ LDP for general resampling maps}]
\label{annLDPthm}
Let $\tau$ be a resampling map, let $X = (X_n, n \in \bb{N})$ follow the law of the annealed RWCRE defined in \eqref{eq:annealed_rwcre}, and let $\mu_n$ be the probability measure on $\bb{R}$ defined by $\mu_n(A): = \bb{P}(X_n/n \in A)$. 

If there is a function $\Lambda: \bb{R}\to \bb{R}$ that is strictly convex and such that
for all $\theta \in \bb{R}$,
\begin{equation}\label{scgf-convergence}
    \begin{aligned}
        \lim_n \frac{1}{n} \log \bb{E}[e^{\theta X_n}]&= \lim_n \sum_k \frac{T_{k,n}}{n} \frac{1}{T_{k,n}} \log \bb{E}[e^{\theta Z_{T_{k,n}}}] = \Lambda(\theta).
    \end{aligned}
\end{equation}
Then, $(\mu_n, n \in \bb{N})$ satisfies the large deviation principle in the sense of Definition III.6 of \cite{dH00} with rate $n$ and function $I$ given by 
\begin{equation}\label{rate_function}
I(x) = \sup\{x \theta - \Lambda(\theta):   \theta \in \bb{R}\}.
\end{equation}
\end{theorem}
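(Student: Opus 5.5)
This is essentially the Gärtner--Ellis theorem applied to $\mu_n$, using the block decomposition \eqref{eq:X_dec} to justify the hypothesis \eqref{scgf-convergence}. I would state the Gärtner--Ellis theorem \cite[Thm V.6]{dH00} for the logarithmic moment generating functions $\Lambda_n(\theta):=\frac1n\log\bb{E}[e^{\theta X_n}]$ and verify its hypotheses one at a time.

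First, the identity in \eqref{scgf-convergence} needs to be made explicit. Under the annealed law, the block increments $(Y_1,\dots,Y_{\ell_n-1},\bar Y^{\,n})$ are independent, and by \eqref{eq:increment_identification} (averaged over $\omega_k\sim\mu$) they are distributed as annealed RWRE displacements $Z_{T_{k,n}}$. The joint annealed law does not factor under the quenched measure, but it does factor under $\mu^{\bb{N}}$, because the $\omega_k$ are independent and the walk is translation invariant in law. Hence
\[
\log\bb{E}[e^{\theta X_n}]=\sum_k\log\bb{E}[e^{\theta Z_{T_{k,n}}}],
\]
and the equality in \eqref{scgf-convergence} is exact. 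The hypothesis then gives $\Lambda_n\to\Lambda$ pointwise on all of $\bb{R}$. Since $|Z_T|\le T$, we have $|\Lambda_n(\theta)|\le|\theta|$. So $\Lambda$ is finite everywhere, $0$ lies in the interior of its effective domain, and $\Lambda$ is convex as a pointwise limit of convex functions.

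Second, Gärtner--Ellis requires $\Lambda$ to be lower semicontinuous and essentially smooth. Lower semicontinuity is automatic, since a finite convex function on $\bb{R}$ is continuous. Steepness is vacuous because the domain is all of $\bb{R}$. The remaining point, and I expect the main obstacle, is differentiability: the theorem assumes only strict convexity. I would try two routes. First, I would attempt to prove directly that $\Lambda$ is differentiable, using that each $\frac1T\log\bb{E}[e^{\theta Z_T}]$ converges, as $T\to\infty$, to the annealed RWRE s.c.g.f. (which is differentiable, from the annealed LDP of \cite{CGZ00,DPZ96}), while the finite-$T$ pieces are analytic. This is uncertain in general because $\Lambda$ is a limit of mixtures. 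If it fails, I would fall back on the weaker form of Gärtner--Ellis, in which the lower bound holds on exposed points. Strict convexity of $\Lambda$ means every $\theta$ gives a supporting line touching the graph of $\Lambda$ at a single point. Dually, $I$ is then essentially smooth/differentiable on the interior of its domain, and every $x$ in the interior of the set $\{I<\infty\}$ that lies in the range of the subdifferential of $\Lambda$ is an exposed point of $I$. That set is all of the interior of the domain, so the exposed-point lower bound extends to all open sets by convexity and continuity of $I$ on $[-1,1]$.

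Third, the upper bound. Since $|X_n/n|\le1$, the family $(\mu_n)$ is exponentially tight, and the Chernoff bound together with a covering of compact sets gives $\limsup\frac1n\log\mu_n(F)\le-\inf_F I$ for closed $F$, with $I$ as in \eqref{rate_function}; this part needs only pointwise convergence. For the lower bound I would use the standard tilting argument. For an exposed point $x=\Lambda'(\eta)$ (or $x$ in $\partial\Lambda(\eta)$), define the tilted measure $d\tilde\mu_n\propto e^{n\eta y}d\mu_n$. Its s.c.g.f. is $\Lambda(\eta+\cdot)-\Lambda(\eta)$, and the upper bound applied to it, together with strict convexity, shows that $\tilde\mu_n$ concentrates near $x$. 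This yields $\liminf\frac1n\log\mu_n(B_\delta(x))\ge-I(x)$. Finally, I would check that $I$ is a good rate function; this holds because it is a convex, lower semicontinuous function that is finite only on $[-1,1]$.
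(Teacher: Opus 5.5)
\textbf{The gap is the lower bound.} Your factorisation of the annealed s.c.g.f., the bound $|\Lambda_n(\theta)|\le|\theta|$, and the upper bound (exponential tightness plus Chernoff) are all fine. The problem is exactly the differentiability issue you flag, and neither of your two routes resolves it.

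In the fallback route the duality is reversed. Strict convexity of $\Lambda$ makes $I=\Lambda^*$ differentiable. But $x$ is an exposed point with exposing hyperplane $\eta$ if and only if $\partial\Lambda(\eta)=\{x\}$, i.e.\ if and only if $\Lambda$ is differentiable at $\eta$. Suppose $\Lambda$ has a kink at $\eta$ with $\partial\Lambda(\eta)=[a,b]$, $a<b$. Then $I$ is affine on $[a,b]$ and no point of $(a,b)$ is exposed. Your tilted measures also fail to concentrate, since their rate function $I(y)-\eta y+\Lambda(\eta)$ vanishes on all of $[a,b]$. Convexity and continuity of $I$ cannot bridge this. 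Take $Y_n=\pm1+G_n$ with a fair random sign and an independent $G_n\sim\mc{N}(0,2/n)$. Then $\frac1n\log\bb{E}[e^{n\theta Y_n}]\to|\theta|+\theta^2$, which is finite and strictly convex, and $\Lambda^*\equiv0$ on $[-1,1]$. Yet $\bb{P}(|Y_n|<\frac12)$ decays like $e^{-n/16}$. So no argument that uses only $\Lambda$ and its strict convexity can give the lower bound.

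Your first route fails as well. The annealed RWRE s.c.g.f.\ $\Lambda_a(\theta)=\lim_T\frac1T\log\bb{E}[e^{\theta Z_T}]$ is \emph{not} differentiable at $0$ when the speed $v$ is positive and $I_a=0$ on $[0,v]$ (the flat piece), because then $\partial\Lambda_a(0)=[0,v]$. Now take a map that, up to every large time, spends half the time in unit blocks and half in blocks of diverging length. It gives $\Lambda=\frac12\Lambda_1+\frac12\Lambda_a$ with $\Lambda_1(\theta)=\log\bb{E}[e^{\theta Z_1}]$. This $\Lambda$ is strictly convex, so it meets the hypothesis of the theorem, but it keeps the kink at $0$.

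\textbf{What is missing is a lower bound built from the model, which is how the paper argues.} Strict convexity of $\Lambda$ plays no role in the paper's argument. The paper proves pointwise bounds on $-\frac1n\log\bb{P}(X_n=[[nx]]_n)$.
\begin{itemize}
\item From above: it restricts to the event $X_{\tau(i)}=[[\tau(i)x]]_{\tau(i)}$ at every refreshing time and factorises over the independent annealed blocks. It then combines the static RWRE local estimate $\frac1T\log\bb{P}(Z_T=[[Tx]]_T)\to-I_a(x)$ with the Toeplitz lemma.
\item From below: it uses the same exponential Chebyshev bound you use.
\item To pass to open and closed sets, it uses that $X_n$ takes at most $2n+1$ values, together with the uniform-ellipticity shift of Lemma~\ref{l:uc}.
\end{itemize}
This blockwise construction captures the subexponential slowdown on the flat piece, which cannot be seen from $\Lambda$ alone.

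Forcing a common speed $x$ in every block is tailored to cooling maps. For general maps you would let different blocks run at different speeds averaging to $x$ and then optimise. In the mixed example above, the optimal cost is $\inf\{\frac12\Lambda_1^*(x_1)+\frac12 I_a(x_2):\frac12(x_1+x_2)=x\}$, obtained from Cram\'er on the unit blocks and the forcing argument on the long ones. By conjugate duality this equals $\Lambda^*(x)$. With such a construction supplying the lower bound, your Gärtner--Ellis upper bound can stay as it is.
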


The proof of Theorem \ref{annLDPthm} is given in appendix \ref{app:LDP}. The
key step  in the proof of Theorem \ref{annLDPthm} is the proof of convergence of the s.c.g.f. as given in the condition \eqref{scgf-convergence}. 
As a consequence of this theorem, one recovers the cooling-map case: when $\tau$ satisfies \eqref{effective-cooling}, the limit in \eqref{scgf-convergence} holds with $\Lambda(\theta)=\sup\{\theta x-I(x), x\in\bb{R}\}$, where $I$ is the annealed rate function for the empirical speed of the underlying static RWRE, as originally identified in \cite{GdH94}.

\begin{corollary}[{\bf Persistence of annealed LDP for cooling maps}]\label{prop:cooling-scgf}
{Consider a cooling map $\tau$ as in \eqref{effective-cooling} and an environment law $\mu$ as in \eqref{eq:mu_iid}. Let $X = (X_n, n \in \bb{Z}_+)$ be the annealed RWCRE obtained from $P^{ \mu, \tau}_0$ as defined in \eqref{eq:annealed_rwcre},} and for each $\theta \in \bb{R}$ let $I^*(\theta): = \lim_{T \to \infty} \frac{1}{T} \log \bb{E}[e^{\theta Z_T}]$, where $Z = (Z_n, n \in \bb{Z}_+)$ is the RWRE obtained from $P^\mu$ as defined in \eqref{eq:annealed_rwre}. Then  
    \begin{equation}\label{scgf-cooling-1}
    \begin{aligned}
        \lim_n \frac{1}{n} \log \bb{E}[e^{\theta X_n}]&= \lim_n \sum_k \frac{T_{k,n}}{n} \frac{1}{T_{k,n}} \log \bb{E}[e^{\theta Z_{T_{k,n}}}] = I^*(\theta).
    \end{aligned}
\end{equation}
{In particular, under the annealed measures, the empirical speeds of RWRE and RWCRE satisfy the same rate $n$ LDP with rate function given by the Legendre-Fenchel transform of $I^*$.}
\end{corollary}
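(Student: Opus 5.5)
The plan is to verify the hypothesis \eqref{scgf-convergence} of Theorem~\ref{annLDPthm} with $\Lambda = I^*$, and then read off the LDP from that theorem.

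\textbf{Step 1: block additivity.} Under the annealed law the block increments are independent, since each block uses a freshly sampled environment. By \eqref{eq:X_dec} and \eqref{eq:increment_identification}, for every $n$ this gives the exact identity
\[
\frac1n\log\bb{E}[e^{\theta X_n}] = \sum_k \gamma_{k,n}\,\phi_{T_{k,n}}(\theta),\qquad \phi_T(\theta):=\frac1T\log\bb{E}[e^{\theta Z_T}],
\]
with $\gamma_{k,n}$ as in \eqref{gamma-def}. This is the first equality in \eqref{scgf-cooling-1}.

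\textbf{Step 2: the limit $I^*$ and uniform bounds.} For the static annealed RWRE, the limit $I^*(\theta)=\lim_{T\to\infty}\phi_T(\theta)$ exists. This is the annealed LDP/s.c.g.f.\ of \cite{CGZ00,DPZ96}, and is implicit in the statement. I also use the trivial bound $|\phi_T(\theta)|\le|\theta|$, which holds because $|Z_T|\le T$.

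\textbf{Step 3: Toeplitz argument.} I mirror the argument of \eqref{toepliz-here}. Fix $\theta$ and $\varepsilon>0$, and choose $A$ such that $|\phi_T(\theta)-I^*(\theta)|<\varepsilon$ for $T>A$. Using $\sum_k\gamma_{k,n}=1$, write
\[
\Big|\sum_k\gamma_{k,n}\phi_{T_{k,n}}(\theta)-I^*(\theta)\Big|\le 2|\theta|\sum_k\gamma_{k,n}\bbm{1}_{\{T_{k,n}\le A\}}+\varepsilon .
\]
It remains to show that the total mass of short blocks vanishes. By \eqref{effective-cooling}, only finitely many complete blocks $k\le k_0(A)$ have $T_k\le A$, and they contribute at most $\tau(k_0)/n\to0$. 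The only other block with $T_{k,n}\le A$ is the boundary block $k=\ell_n$, which contributes at most $A/n\to0$. Hence the limit equals $I^*(\theta)$.

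\textbf{Step 4: applying Theorem~\ref{annLDPthm}.} This step needs $I^*$ to be finite, which follows from the bound in Step 2, and strictly convex. Strict convexity is the main obstacle. The known annealed RWRE s.c.g.f.\ is not strictly convex everywhere in general: in the ballistic regime the rate function has a flat piece on $[0,v]$, which corresponds to a kink of $I^*$ at $0$ rather than a failure of strict convexity, but linear pieces of $I^*$ may also occur.

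If strict convexity fails, I would avoid relying on Gärtner--Ellis. Instead I would use the fact that \emph{the same limit} holds for RWRE and RWCRE. The upper bound follows from the convergence of the s.c.g.f.\ alone. For the lower bound, I would argue directly on blocks: on each long block, use the RWRE annealed lower bound at speed $x$ and concatenate by independence. The tilts are block-uniform because the same speed $x$ is targeted in every block. Short blocks cost only $e^{-o(n)}$ thanks to uniform ellipticity \eqref{eq:ue}.

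Either way, the annealed rate function is $\sup_\theta[\theta x-I^*(\theta)]$, which coincides with the RWRE annealed rate function.
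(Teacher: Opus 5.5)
Your proposal is correct and is essentially the paper's proof: the exact block factorisation of the annealed moment generating function from \eqref{eq:increment_identification}, the Toeplitz argument of \eqref{toepliz-here} with $T_k\to\infty$ making the short blocks negligible, and then Theorem~\ref{annLDPthm}. Your caution about the strict-convexity hypothesis is well founded, since the paper's proof does not check it and a corner of the RWRE rate function (e.g.\ at $0$ in the transient regime) makes $I^*$ affine on an interval; your fallback (exponential Chebyshev upper bound plus block concatenation, with uniform ellipticity for the short blocks) is precisely the argument of Appendix~\ref{app:LDP}, and, as there, identifying both bounds with $\sup_\theta[\theta x-I^*(\theta)]$ relies on the convexity of the static annealed RWRE rate function, which is taken from the literature.
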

\begin{proof}

{Recall the definition of $T_{k,n}$ given in \eqref{gamma-def}.
$X$ 
satisfies  \eqref{eq:increment_identification}, and in particular  $E_0^{\bar{\omega},\tau}\big[e^{\theta Y_k} \big]=E_0^{\omega_k}\big[e^{\theta Z_{T_k}}\big]$. The limit of the s.c.g.f. in \eqref{scgf-cooling} follows by applying the Toeplitz Lemma \cite[Lemma 1]{AdH17springer} in the same way as in \eqref{toepliz-here}. The conclusion then follows from  
Theorem \ref{annLDPthm}.}
\end{proof}


It is worth stressing that Theorem \ref{annLDPthm} also determines the LDP rate function for resampling maps that do not satisfy \eqref{effective-cooling}, including those from the simpler ``homogenising" class considered in Section \ref{sec:homogenization}. As we shall see in Section \ref{ssec:game_of_mass}, the convergence of $v_T \to v$ for the weak LLN discussed above, as well as the convergence of the s.c.g.f in \eqref{scgf-convergence} for the above annealed LDP is crucial in the proof of convergence of cooling ergodic averages of non-centred random variables. In the next section we discuss convergence of such general cooling ergodic averages starting with the case of centred random variables.
\subsection{Cooling Ergodic theorems for centered variables.}
\label{sec:gradual-sums}

Having established the annealed WLLN (at least for cooling maps) and a general annealed LDP for RWCRE, we now isolate the mechanism behind these results in an abstract form. The common structure is that relevant time-averaged observables decompose along refreshing blocks and, after normalization by the total time, become \emph{weighted sums} of independent block contributions. This leads us to study \emph{gradual sums}, i.e.\ convex combinations of independent random variables with weights dictated by the resampling map. Recall $\bar{T}_n = n - \tau(\ell_n)$, see \eqref{eq:time_incr}. For the study of the speed, we consider sums of the form
\begin{equation}\label{gradual}
\frac{X_n}{n}= \sum_{j=1}^{\ell_n-1} \frac{T_j}{n} \frac{X^{(k)}_{(\tau(j)\wedge n)-(\tau(j-1)\wedge n)}}{T_j} + 
\frac{\bar{T}_n}{n} X^{(\ell_n)}(\bar{T}_n) = \sum_j \gamma_{j,n} Y_{j,n}, 
\end{equation}
where the weights $\gamma_t = (\gamma_{j,t}, j \in \bb{N})$ satisfy $\sum_j \gamma_{j,t} = 1$ for all $n$ and the random variables $Y = (Y_{j,n}, j \in \bb{N})$ satisfy 
\[
Y_{j,n} = 
\begin{cases}
    X^{(j)}_{n \gamma_{j,n}}/(n \gamma_{j,n}), & \text { if } \gamma_{j,n}>0\\
0 &\text{ if } \gamma_{j,n} = 0.
\end{cases}
\]
The weights encode the relative time spent in each block up to time $n$, while the $Y_{j,n}$ encode the normalized block contributions. 

In general, for $t \in \bb{R}_+$,  we consider triangular arrays of weights $\gamma_t=(\gamma_{j,t})_{j\ge 1}$ with $\gamma_{j,n}\ge 0$ and $\sum_{j\ge 1}\gamma_{j,n}=1$, and triangular arrays of independent random variables $Y_n=(Y_{j,n})_{j\ge 1}$ 
The object of study is 
\begin{equation}\label{gradual-1}
\frac{X_t}{t} := \sum_{j\geq 1}\gamma_{j,t}\,Y_{j,t}.
\end{equation}

The purpose of this section is twofold. First, we introduce a general set of assumptions on $(\gamma_{j,t}, j \in \bb{N}, t \in \bb{R}_+)$ and on the concentration/tail behaviour of $(Y_{j,t})$ that guarantee a WLLN for $X_t/t$. Second, we identify additional hypotheses under which this can be strengthened to an SLLN. These results will serve as the abstract ``cooling ergodic tools'' used throughout the sequel and will later be specialized back to RWCRE to derive the SLLN  and the quenched LDP for the average displacement.

We have applied a generalization of Toepliz lemma, in the formulation of \cite[Lemma 1]{AdH17springer}, twice to obtain the WLLN and to determine the s.c.g.f. for the displacement of the annealed RWCRE. In both aplications we are dealing with sums of  the form $\sum_{k} \gamma_{k,n} z_{n \gamma_{k,z}}$ with $\gamma_{k,n} \geq 0$, $\sum_k \gamma_{k,n} = 1$, and $z_{T} \to z$ whenever $T \to \infty$. To focus on the noisy behaviour of these ergodic averages, we shall focus first on the study of centred random variables. We abstract from the set up of weighted sums and focus on the conditions on $\mathfrak{X}$ required to ensure that $X_t/t$ satisfies the WLLN when $X_t$ is as given in \eqref{cooling-general-tau}.
\begin{theorem}[{\bf Weak LLN, \cite[Thm~2.1]{AvedCos24game}}]
\label{incrementalweak} 
Assume that $\mathfrak{X}$ satisfies:
\begin{description}
	\item[\namedlabel{C}{C}] \emph{(Centering)}
\begin{equation}\notag
\forall\,m \in \bb{R}_+, \, k \in \bb{N};\quad \bb{E}\crt{X^{(k)}_m} = 0.
\end{equation}
\item[\namedlabel{W1}{W1}]  \emph{(Concentration)} 
\begin{equation}\notag
	\lim_{m\to\infty}    \sup_{k} \bb{P}\prt{\abs{X^{(k)}_m/m}>\epsilon} =0,
	\quad  \forall \varepsilon>0. 
\end{equation}   
\item[\namedlabel{W2}{W2}]  \emph{(Uniform Integrability)} 
 \begin{equation}\notag
	\lim_{A\to \infty} \sup_{k,m} \bb{E} \crt{\abs{X_k(m)}\Ind{\abs{X_k(m)}>A}} = 0. 
\end{equation}
\end{description}

Let $X_t = X_t(\mathfrak{X},\tau)$ be given by  \eqref{cooling-general-tau}. Then,
\begin{equation}\notag
\lim_{t\to\infty}\bb{P}\prt{\abs{X_t/t}>\epsilon}=0,\quad  \forall \varepsilon>0. 
\end{equation} 	
\end{theorem}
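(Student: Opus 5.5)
The natural route is to split the blocks into short ones and long ones and treat them differently. Write $X_t=\sum_j X^{(j)}_{T_{j,t}}$ with $T_{j,t}=(\tau(j)\wedge t)-(\tau(j-1)\wedge t)$ and $\gamma_{j,t}=T_{j,t}/t$, so $\sum_j\gamma_{j,t}=1$. We read \ref{W2} as uniform integrability of the family $\{X^{(k)}_m/m\}_{k,m}$ (the normalized increments). That is what makes the statement scale-correct, and it is automatic for nearest-neighbour walks since $|X^{(k)}_m/m|\le 1$.

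Fix $\varepsilon>0$ and a threshold $A$. Long blocks are the indices with $T_{j,t}>A$; short blocks are those with $T_{j,t}\le A$.

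\textbf{Long blocks.} Here we use \ref{W1} together with \ref{W2} through an $L^1$ bound, exactly in the spirit of the Toeplitz argument \eqref{toepliz-here}. Set $\delta(A):=\sup_{m>A}\sup_k \bb{E}\big[|X^{(k)}_m/m|\big]$. Splitting on the event $\{|X^{(k)}_m/m|>\eta\}$ gives
\[
\bb{E}\big[|X^{(k)}_m/m|\big]\le \eta + \bb{E}\big[|X^{(k)}_m/m|\Ind{|X^{(k)}_m/m|>\eta}\big].
\]
Uniform integrability controls the tail part uniformly in $(k,m)$ on events of small probability, and by \ref{W1} these probabilities are small. Hence $\delta(A)\to0$ as $A\to\infty$. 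By the triangle inequality the long blocks contribute at most $\sum_j\gamma_{j,t}\delta(A)\le\delta(A)$ in $L^1$ to $X_t/t$. Markov's inequality then bounds their contribution to $\bb{P}(|X_t/t|>\varepsilon)$ by $2\delta(A)/\varepsilon$.

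\textbf{Short blocks.} No cooling is assumed, so the short blocks may carry most of the mass; in the non-cooling case they can be all of it. Their sum is $S^{<}_t:=\sum_{j:T_{j,t}\le A}X^{(j)}_{T_{j,t}}$, a sum of independent centred variables by \ref{C}. The plan is a truncation-plus-variance argument; this is the main obstacle, because \ref{W2} only gives first-moment uniform integrability and we cannot compute variances directly.

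Truncate each summand at level $L$: write $X^{(j)}_{T}=U_j+V_j$ with $U_j:=X^{(j)}_{T}\Ind{|X^{(j)}_T|\le L}-\bb{E}\big[X^{(j)}_{T}\Ind{|X^{(j)}_T|\le L}\big]$. The $U_j$ are centred, independent and bounded by $2L$, so
\[
\mathrm{Var}\Big(\sum U_j\Big)\le 4L^2\,\#\{j:T_{j,t}\le A, T_{j,t}>0\}\le 4L^2 t.
\]
Chebyshev then gives $\bb{P}(|\sum U_j|>\varepsilon t/3)\le 36L^2/(\varepsilon^2 t)\to0$. The remainders $V_j$ are centred tail parts. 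Since $T\le A$, we have $|X^{(j)}_T|\le A\,|X^{(j)}_T/T|$, so uniform integrability of the normalized family gives $\bb{E}|V_j|\le 2\,\eta_A(L)\,T_{j,t}$ with $\eta_A(L)\to0$ as $L\to\infty$ for fixed $A$. Summing over $j$ yields $\bb{E}\big|\sum V_j\big|/t\le 2\eta_A(L)$.

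To conclude, first choose $A$ so that $\delta(A)$ is small, then $L$ so that $\eta_A(L)$ is small, and finally let $t\to\infty$. This proves $\lim_t\bb{P}(|X_t/t|>\varepsilon)=0$. The boundary block needs no separate treatment, since it is simply either a long or a short block in this decomposition.
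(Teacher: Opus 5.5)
Your proof is correct. That includes your reading of \ref{W2} as uniform integrability of the normalized family $\{X^{(k)}_m/m\}$, which is the scale-consistent form used in \cite{AvedCos24game}.

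The paper gives no proof of its own and defers to \cite{AvedCos24game}, Section 5, so only a partial comparison is possible. The one related argument it displays, \eqref{toepliz-here}, matches your long-block step: an $L^1$ bound, Markov's inequality, and a Toeplitz-type splitting by block length. In your version, \ref{W1} and \ref{W2} together supply $\sup_{m>A}\sup_k \bb{E}|X^{(k)}_m/m|\to 0$.

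That mechanism alone only handles cooling maps, where blocks of length at most $A$ carry vanishing total weight. Your truncation--Chebyshev bound for the short blocks is the extra ingredient needed for a general $\tau$. It is also the only place where independence across blocks is used. The statement leaves independence implicit, but it cannot be dropped: take $X^{(k)}_m=(m\wedge 1)\xi$ with a single common Rademacher variable $\xi$ and $\tau(n)=n$; then $X_t/t=\xi$ for all $t$.

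Two minor points:
\begin{itemize}
\item Your count $\#\{j:0<T_{j,t}\le A\}\le t+1$ relies on $\tau$ being integer-valued. If you truncate the normalized variables $X^{(j)}_T/T$ at level $L$ instead, the variance bound becomes $4L^2\sum_j\gamma_{j,t}^2\le 4L^2A/t$. This removes both the counting step and the $A$-dependence of $\eta_A(L)$, and it also covers the real-valued mass sequences of Section~\ref{ssec:game_of_mass}.
\item You should split $\varepsilon$ into three equal parts, rather than using $\varepsilon/2$ for the long blocks and $\varepsilon/3$ for each short-block piece.
\end{itemize}
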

\begin{proof}
    See Section 5 in \cite{AvedCos24game}. 
\end{proof}
In order to move from the WLLN to the SLLN for sums of the form \eqref{gradual} one faces a challenge that is best explained by the following example. 

Let $\chv{U_k, k \in \bb{N}}$ be a sequence of
i.i.d. uniform random variables on $(0,1)$ and let
$\mathfrak{X} = (X^{(k)}_m := m g_m(U_k), k, m \in \bb{N})$ where
\begin{equation}\label{e:nce}
g_m(x) :=
\begin{cases}
 \phantom{-}1  & \text{if }   x \in (0,\frac{1}{2\log_2 (1 + m)}),\\
 -1 & \text{if }   x \in [\frac{1}{2\log_2 m},\frac{1}{\log_2 (1 +m)}),\\
  \phantom{-}0 & \text{else}.\\
\end{cases}
\end{equation}
Note that $\mathfrak{X}$ fulfills
assumptions~\eqref{C}, \eqref{W1}, \eqref{W2}.
Moreover, we have
\begin{equation}\label{e:ncr}
\bb{P}\prt{X^{(k)}_m/m= 1} = \frac{1}{2\log_2m},\quad \text{ and }
\quad \bb{P}\prt{X^{(k)}_m/m = -1} = \frac{1}{2\log_2m}.
\end{equation}
Now take $\tau$ such that$\tau_k = 4^k$. In this case we see that the incremental sum \eqref{gradual} does not
satisfy the strong LLN. Indeed, as
\begin{equation}\label{e:bc2}
  \sum_{k = 1}^\infty \bb{P}\prt{X^{(k)}_{T_k}/{T_k}=1} = \infty,\quad \text{ and } \quad
  \sum_{k = 1}^\infty \bb{P}\prt{X^{(k)}_{T_k}/{T_k}=-1} = \infty
\end{equation} 
by the second Borel-Cantelli lemma,
\begin{equation}\label{e:bc3}
\bb{P}\prt{X^{(k)}_{T_k}/{T_k}=1, \text{ i.o} } = 1,\quad \text{ and } \quad \bb{P}\prt{X^{(k)}_{T_k}/{T_k}=-1, \text{ i.o} } = 1.
\end{equation}
Note that $T_k = (4^{k} - 4^{k-1})$ and that by  \eqref{gradual}
\begin{equation}
\label{difcompute}
\begin{aligned}
\abs{\frac{X_{\tau(k)}}{\tau(k)} - \frac{X^{(k)}_{T_k}}{T_k}} &= \abs{\frac{X_{\tau(k)} - X^{(k)}_{T_k}}{\tau(k)} +X^{(k)}_{T_k}\Big(\frac{1}{\tau(k)} -\frac{1}{T_k}\Big)} \\
&\leq   \frac{\tau(k-1)}{\tau(k)} + \bigg(1 - \frac{T_k}{4^{k}}\bigg)\leq 2 \frac{4^{k-1}}{4^k}\leq \frac{1}{2}.
\end{aligned}
\end{equation}
Therefore
\[
\bb{P}(\abs{X_t/t - 1} < \frac{1}{2} \text{ i.o.}) =1,\quad
\text{ and } \quad
\bb{P}(\abs{X_t/t + 1} < \frac{1}{2} \text{ i.o.}) =1,
\]
which  means that almost surely $X_t/t$ does not converge.
\medskip


{In view of this example, to obtain a strong LLN in the centred case we impose further
conditions on $\bb{X}$.  In particular the concentration condition in Theorem~\ref{incrementalweak}
will be strengthened by requiring a mild polynomial decay and the
uniform domination.}

\begin{theorem}[{\bf Strong LLN, \cite[Thm~2.2]{AvedCos24game}}]
\label{incrementalstrong} 
Assume that $\bb{X}$ satisfies \eqref{C} and 
\begin{description}
\item[\namedlabel{S1}{S1}]\emph{(Polynomial decay)} There is a
  $\delta>0$ such that for all $\epsilon>0$ there is a $C = C(\epsilon)$ for
  which
\begin{equation}\notag
\sup_k\bb{P}\left(\abs{X^{(k)}_m/m}>\epsilon\right)<\frac{C}{m^\delta}.
\end{equation}
\item[\namedlabel{S2}{S2}] \emph{(Uniform domination)} There is a random variable $X_*$ and $\gamma>0$ such  that $\bb{E}(\abs{X_*}^{2 +\gamma })<\infty$ and for all $x \in \bb{R}$
\begin{equation}\notag
\sup_{k,m}\bb{P}(|X^{(k)}_m/m|)>x)\leq \bb{P}(X_*>x). \end{equation}  
\end{description}

Let $X_t = X_t(\mathfrak{X},\tau)$ be given by  \eqref{cooling-general-tau}. Then,
\begin{equation}\label{sLLNincremental}
\bb{P}\prt{\lim_{t\to\infty}X_t/t= 0} = 1.
\end{equation} 
\end{theorem}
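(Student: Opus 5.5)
We write $X_t/t=\sum_j\gamma_{j,t}Y_{j,t}$ as in \eqref{gradual-1}, with $Y_{j,t}=X^{(j)}_{T_{j,t}}/T_{j,t}$ independent, centred by \eqref{C}, and uniformly dominated in tail by $X_*$ via \eqref{S2}. The plan is to fix $\epsilon>0$, split each block according to whether its length $T_{j,t}$ is short or long relative to a threshold growing polynomially in $t$, and prove an almost sure bound of order $\epsilon$ for each piece.

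\textbf{Step 1 (reduction to a sparse sequence of times).} First we reduce from all $t$ to a geometric or polynomial subsequence $t_i$. Between consecutive times the increment $X_{t}-X_{t_i}$ is a sum of block pieces of total length $t-t_i$, and the piece inside any single block is again a block displacement with tail controlled by \eqref{S2}. We choose $t_i=\lfloor i^{\kappa}\rfloor$ with $\kappa$ large, so that $(t_{i+1}-t_i)/t_i\to0$. We then control $\max_{t_i\le t<t_{i+1}}|X_t-X_{t_i}|/t_i$ through the within-block maxima. Since increments are nearest-neighbour in the RWCRE case, or more generally by domination, this error is $o(1)$; a Borel--Cantelli argument over $i$ using moments of order $2+\gamma$ will be needed here.

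\textbf{Step 2 (long blocks, via \eqref{S1}).} Fix a threshold $L_t=t^{\beta}$ with $\beta\in(0,1)$. At most $t/L_t=t^{1-\beta}$ blocks have $T_{j,t}>L_t$, and each satisfies $\mathbb{P}(|Y_{j,t}|>\epsilon)\le C t^{-\beta\delta}$. Hence
\[
\mathbb{P}\Bigl(\exists j:\,T_{j,t}>L_t,\ |Y_{j,t}|>\epsilon\Bigr)\le C t^{1-\beta-\beta\delta}.
\]
We want this to be summable along $t_i$. Note that only blocks that changed between $t_{i-1}$ and $t_i$ produce new events, and completed blocks are fixed random variables. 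So it suffices to require that each completed block $j$ eventually satisfies $|Y_j|\le\epsilon$, which holds almost surely if $\sum_j T_j^{-\delta}<\infty$ over long blocks. That condition need not hold, so instead we split the long blocks dyadically by length $2^r$. The total weight of blocks with $|Y|>\epsilon$ among those of length about $2^r$ has expectation at most $C2^{-r\delta}$ times their weight. Summing over $r\ge\beta\log_2 t$ gives expected total weight at most $Ct^{-\beta\delta}$, which is summable along $t_i=i^\kappa$ once $\kappa\beta\delta>1$. Boundedness from \eqref{S2} lets us convert weight into contribution, since $|Y|$ has bounded $L^{2+\gamma}$ norm and Hölder's inequality applies.

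\textbf{Step 3 (short blocks, via \eqref{S2}).} The short blocks, $T_{j,t}\le L_t$, are numerous, and their weights satisfy $\max_j\gamma_{j,t}\le t^{\beta-1}$. Their sum $\sum\gamma_{j,t}Y_{j,t}$ is a weighted sum of independent centred variables with uniformly bounded $(2+\gamma)$-th moments. We truncate at level $t^{\eta}$; the truncation error is handled by \eqref{S2} and Markov's inequality. The bounded part is treated with a Bernstein or fourth-moment bound. Since its variance is at most $C\max\gamma\le Ct^{\beta-1}$, we obtain a probability bound that decays polynomially in $t$, hence is summable along $t_i$.

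\textbf{Main obstacle.} We expect the main difficulty to be the interplay in Step 2 between long blocks and the subsequence. The counterexample \eqref{e:nce} shows that the $\delta$-polynomial decay is exactly what rules out infinitely many bad long blocks carrying macroscopic weight, and the exponents $\beta,\kappa,\eta$ must be chosen compatibly. We will take $\beta$ close to $1$ so that Step 3 has room, and $\kappa$ large so that Step 2 sums. We then verify that Step 1 survives with $\kappa$ large, using the $2+\gamma$ moment from \eqref{S2}.
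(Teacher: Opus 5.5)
\textbf{Steps 2 and 3 are fine.} Along the grid $t_i=\lfloor i^\kappa\rfloor$ both work. At a fixed time $t$ all pieces, the partially filled one included, are independent and satisfy the marginal bounds (S1)--(S2). For long blocks, Hölder gives $\mathbb{E}\bigl[\sum_{j:\,T_{j,t}>t^\beta}\gamma_{j,t}|Y_{j,t}|\mathbf{1}_{\{|Y_{j,t}|>\epsilon\}}\bigr]\le Ct^{-\beta\delta/2}$, with no dyadic splitting needed. For short blocks, plain Chebyshev already gives $Ct^{\beta-1}\epsilon^{-2}$. Both bounds are summable along the grid once $\kappa\min\{\beta\delta/2,\,1-\beta\}>1$.

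\textbf{The genuine gap is Step 1.} Hypotheses (C), (S1), (S2) only constrain the law of $X^{(k)}_m$ at each fixed $m$; they say nothing about $m\mapsto X^{(k)}_m$ as a path. For $t\in[t_i,t_{i+1})$ the active block contributes $X^{(\ell_t)}_{t-\tau(\ell_t-1)}$. This is a different random variable for every $t$, possibly of order $t$, and unrelated to its value at $t_i$. So ``domination'' gives no control of $\max_{t_i\le t<t_{i+1}}|X_t-X_{t_i}|$. A union bound over these $t$ using the marginal estimate $Ct^{-\delta}$ leads to $\sum_t t^{-\delta}=\infty$ when $\delta\le 1$.

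No Borel--Cantelli argument with $2+\gamma$ moments can repair this, because the all-$t$ conclusion is false under these hypotheses alone. Let $X^{(k)}_m=m\,\xi^{(k)}_m$ with all $\xi^{(k)}_m$ independent, $\mathbb{P}(\xi^{(k)}_m=\pm1)=\frac12 m^{-1/2}$ and $\xi^{(k)}_m=0$ otherwise, and let $T_k=2^{2^k}$. Then (C) holds, (S1) holds with $\delta=\frac12$, and (S2) holds with $X_*\equiv 1$. Moreover $\tau(k-1)\le 2\sqrt{T_k}$. With probability at least $1-e^{-\sqrt{T_k}/4}$ some $m\in[T_k/2,T_k]$ has $\xi^{(k)}_m=1$, and at $t=\tau(k-1)+m$ this gives $X_t/t\ge \frac{m-\tau(k-1)}{m+\tau(k-1)}\to 1$. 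By the second Borel--Cantelli lemma, $\limsup_t X_t/t=1$ almost surely.

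\textbf{You therefore need an input not contained in (C), (S1), (S2), and you should state it.}

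\emph{Option 1: Lipschitz paths.} Suppose $|X^{(k)}_{m'}-X^{(k)}_m|\le L|m'-m|$, as for the centred RWCRE pieces (nearest-neighbour steps, $L=2$). Then Step 1 is deterministic: $|X_t-X_{t_i}|/t_i\le L(t_{i+1}-t_i)/t_i\to 0$. No Borel--Cantelli is needed there, and your proof is complete.

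\emph{Option 2: block endpoints only.} If you want the abstract statement only along $t=\tau(n)$, the polynomial grid is the wrong device. Comparing $X_{\tau(n)}$ with $X_{t_i}$ again requires path control of the block straddling $t_i$. You must instead sum over all $n$, using only $\tau(n)\ge n$, and then $\tau_n^{\beta-1}$ is no longer summable.
\begin{itemize}
\item Short blocks need truncation at level $\tau_n^{\eta}$. The truncation is removed almost surely because completed-block variables $Y_k$ do not change with $n$ and $\sum_k\mathbb{P}(X_*>k^\eta)<\infty$ for $\eta\ge 1/(2+\gamma)$. Combine this with a Bernstein bound $e^{-c\tau_n^{1-\beta-\eta}}$, valid for $\beta+\eta<1$.
\item Long blocks need a monotone majorant such as $\tau_n^{-1}\sum_{k\le n}T_k|Y_k|\mathbf{1}_{\{|Y_k|>\epsilon,\,T_k>\tau(k)^\beta\}}$. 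Control it along dyadic values of $\tau_n$ by Markov's inequality and Borel--Cantelli.
\end{itemize}
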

\begin{proof}
    See Section 6 in \cite{AvedCos24game}. 
\end{proof}

In light of the above example, we see that the condition~\ref{S1} is
near to optimal.  Indeed, to improve it, we would need to find $T \mapsto f(T)$
satisfying
\[
\log^k(T)<<f(T) << T^\delta\quad \forall \, k \in \bb{N}, \; \delta >0 .
\]

\begin{remark}[{\bf Persistence of quenched rate function for cooling maps}]

{ One of the fruit of the above SLLN is that, analogously to the annealed case discussed in Section \ref{WLLNannLDP}, it can be used to establish a rate n quenched LDP for the RWCRE empirical speed, and when considering cooling maps 
as in \eqref{effective-cooling}, the resulting quenched rate function coincides with the rate function originally identified in \cite{GdH94} for the RWRE perturbed companion. Such a quenched persistent decay behaviour was first derived in \cite{AveChidCosdHol19} applying the SLLN above\footnote{Even if in 
\cite{AveChidCosdHol19} the statement was presented under  slightly less general conditions, see \cite[Thm~1.12]{AveChidCosdHol19}} 
to guarantee the almost sure existence of the limit in \eqref{eq:ch4-intro-scgf}, together with  
a key concentration result derived for RWRE,
see \cite[Thm~1.13]{AveChidCosdHol19}, which allows to check that condition~\ref{S1} above is satisfied and in particular permitting to invert the Legendre transform to go from the s.c.g.f to the rate function. We refer the reader to \cite{AveChidCosdHol19} for more details about this quenched rate function persistence, and why other general approaches for dynamic random environments such as those in \cite{CDRRS13} based on versions of the Kingmann-subadditive ergodic theorem as discussed e.g. in \cite{Lig85}, do not apply for the RWCRE model due to the lack of standard ergodicity.} \end{remark}

\subsection{The game of mass and explicit speed for general resettings}
\label{ssec:game_of_mass}

The random variables that we consider when dealing with sums of the form \eqref{cooling-general-tau} are not necessarily centred. If the conditions of Theorem \ref{incrementalstrong} are satisfied then the almost sure limit to $0$ of $(X_t - \bb{E}[X_t])/t$ is ensured. The limit of the sequence $X_t/t$ will then depend on the behaviour of $\bb{E}[X_t]/t$. The original set up we for which the ``centered'' cooling ergodic theorems from the previous section were developed assume that the limit of $\bb{E}[X_t]/t$ can be determined both in the study of LLN and LDPs (quenched and annealed). 

Naturally, the original set up is not the only ones for which a limit for $\bb{E}[X_t]/t$ holds true and the framework developed here allows for further generalization and abstraction. For example, one may consider alternating increments $T_k$ and the random variables $X^{(k)}_{T_k}$, in such a way to ensure the convergence of the weighted sums of random variables in $X_t$. 
{In \cite{AvedCos24game} we referred to the generalization of these results to different set ups as the  \emph{the game of mass}. 
In particular, \cite{AvedCos24game} abstracts} from the context of RWCRE by renaming increment sizes as a mass sequence $\mathbf{m}=(m_k,k \in \bb{N})$ and by considering $\tau(n) = \sum_{k =1}^n m_k$,  one is then lead to consider sums $X_t$ of the form \eqref{cooling-general-tau} and their averages $\mc{S}_t = X_t/t$ \eqref{gradual}, refereed to as gradual sums.
To deal with the non-centred case, by considering the sum $\mc{S}_t - \bb{E}[\mc{S}_t]$ we see that weak/strong convergence of $\mc{S}_t$ depends only on the convergence of $\bb{E}[\mc{S}_t]$.
This leads to the study of  structural conditions on the centering of the averages of the triangular sequence of random variables $\bb{E}[X^{(k)}_m]/m$ {and on the relative weights $\gamma_{k,n}$ of the increments considered up to time $t$,  recall \eqref{gamma-def}.
The guiding principle is that the considered mean observable (e.g. the displacement of RWCRE if one is interested in its asymptotic speed or the cumulant for the rate $n$ LDP) can be decomposed into independent block increments,
and the SLLN reduces to understanding how the \emph{mass} allocation across blocks weights these increments provided proper regularity conditions on the chosen mass sequence (i.e. on the corresponding resampling increments for the RWCRE model). 
To briefly discuss these regularities, let us first assume, as in \cite{AvedCos24game}, that}
\begin{equation}\label{mass:same}
\bb{E}[X^{(k)}_m]/m = \bb{E}[X^{(1)}_m]/m =: v_m \text{ for all }k \in \bb{N}, m >0,\\
\end{equation}
and that
\begin{equation}\label{mass:uniformity_limit}
 m\mapsto  v_m  \text{ is a continuous map such that } v_m\to v \text{ as } m \to \infty.
\end{equation}

Given a mass sequence $\mathbf{m} = (m_k,k \in \bb{N})$ with $m_k \in (0, \infty)$ set $\tau(k) = \tau^\mathbf{m} (k) := \sum_{i = 1}^{k}m_k$, let $\ell_t$ be as defined in \eqref{eq:ell_def} and let 
\begin{equation}\label{mass-n-def}
    m_{k,t}: = \tau(k) \wedge t - \tau(k-1) \wedge t.
\end{equation}
{Consider the 
the  \emph{empirical mass measure} denoted by $(\mu^{\mathbf{m}}_t, t \geq 0)$ and the \emph{empirical mass frequency} denoted by $(\mathsf{F}^{\mathbf{m}}_t, t \geq 0)$}, where

\begin{equation}\label{empiricalmass_empirical_freq}
\mu^{\mathbf{m}}_t 
: =  \sum_{k \in \bb{N}} \frac{m_{k,t}}{t} \delta_{m_{k,t}}, \quad \mathsf{F}^{\mathbf{m}}_t := \sum_{k \in \bb{N}}\frac{ \delta_{m_{k,t}}}{\ell_t}.
\end{equation}
{These two empirical measures allow to capture regularity structures in terms of the convergence of $(\mu^{\mathbf{m}}_t, t \geq 0)$ and $(\mathsf{F}^{\mathbf{m}}_t, t \geq 0)$. 
In particular, if $\mu^{\mathbf{m}}_t$ converges weakly to $\mu_*$ on the space of measures on $\bar{\bb{R}}$, the compactification of $\bb{R}$, see Definition 3.1 in \cite{AvedCos24game}, Proposition 3.1 in \cite{AvedCos24game} shows that $\bb{E}[\mc{S}_t]$ converges and the limit can be read out of $\mu_*$. Analogous conclusions can be obtained by working with the empirical mass frequency $(\mathsf{F}^{\mathbf{m}}_t, t \geq 0)$, though, the latter is subtler and we refer the reader to \cite{AvedCos24game} for a detailed discussion on these convergences and their relation. In particular, based on the proper notion of convergence of these empirical measures, and their relations, a rich variety of different explicit limits for $\bb{E}[\mc{S}_t]$ can be analysed, as elucidate by the many examples offered in \cite{AvedCos24game}.} 

\begin{remark}[{\bf Game of mass and interweaving of different processes}]
{The convergence of the empirical mass function mentioned above though sufficient is not a necessary condition for convergence of non-centred random variables. Indeed, if $\bb{E}[X^{(k)}_m]/m =1$ for all $k,m$ then \eqref{mass:uniformity_limit} holds true and $\bb{E}[\mc{S}_t] = \bb{E}[X_t/t] = 1$ for all mass sequence $\mathbf{m}$ and thus, for all resampling maps $\tau$ the LLN holds true according to the concentration properites of $\mathfrak{X}$.
One can relax assumption \eqref{mass:uniformity_limit} and allow for $\bb{E}[X^{(k)}_m]$ to also depend on $k$. When that is the case, the weak convergence of the empirical mass measure is no longer a sufficient condition to determine convergence of $\bb{E}[\mc{S}_t]$. Indeed, if $\bb{E}[X^{(k)}_m]/m = (-1)^k$ and $m_k = 2^{2^k}$ then $\mu_t^{\mathbf{m}} \to \delta_{\infty}$ and $\liminf \bb{E}[\mc{S}_t] = -1 \neq 1 = \limsup \bb{E}[\mc{S}_t]$. 
Yet, one can expand the notion of empirical mass measure to capture which increments are associated to which value $v$ and still characterize the limit provided weak convergence of this more general empirical measure. That is, if we let $v_{k,m}: = \bb{E}[X^{(k)}_m/m]$, the object to consider then is the family $(\mu^{\mathbf{m},v}_t 
, v,t \in \bb{R})$ where
\begin{equation}\label{empirical-v-decomposed}
\begin{aligned}
\mu^{\mathbf{m},v}_t 
&: =  \sum_{k \in \bb{N}} \frac{m_{k,t}}{t}\delta_{(-\infty, v_{m,k}]}, 
\end{aligned}
\end{equation}
and if $\mu^{\mathbf{m},v}_t \to \mu_*$, in a weak sense, then again in this case
$\lim_t\bb{E}[\mc{S}_t] = \int v \mu_*(dv)$. 
}
\end{remark}

\section{Fluctuations and general tools}\label{sec:Fluctuations}

The fluctuation theory for RWRE in one dimension,  originates in the recurrent Sinai--Kesten, \cite{S82, K86} picture and in the work of Kesten--Kozlov--Spitzer, \cite{KKS75_brief} which makes explicit that \emph{there is no single universal scaling} for RWRE fluctuations. In fact, the fluctuations are obtained from the study of hitting-times whose distribution depends on the tail of the ration of left to right jump probabilities $\rho = \frac{1-\omega}{\omega}$. 
When $\bb{E}[\log \rho]<0$, the correct normalisation and the resulting limit law depend on a parameter $s$ (\cite{KKS75_brief} write $\kappa$), defined implicitly by the condition $\bb{E}[\rho^{s}]=1$. 
We take the value  $s=0$ to represent the case  $\bb{E}[\log \rho] = 0$.  This choice splits the flutuation analysis of RWRE into six genuinely different regimes, that we organize into 3 regions and 3 (left) boundary cases according to the value of $s$ in table \ref{tab:classification}.

\begin{table}[h!]
    \centering
    \begin{tabular}{|c|c|}
    \hline
       Region  &   Boundary \\ \hline
        $s \in (0,1)$ sub-balistic & $s = 0$ Sinai regime\\
        $s \in (1,2)$ ballistic stable& $s = 1$ Cauchy regime\\
        $s \in (2, \infty)$ diffusive & $s = 2$ critically diffusive\\
        \hline
    \end{tabular}
    \caption{regime classification}
    \label{tab:classification}
\end{table}

Let $(Z_n, n \in \bb{Z}_+)$ be RWRE with environment law $\mu = \alpha^{\bb{Z}_+}$ that satisfies uniform ellipticity \eqref{eq:ue}, \eqref{non-lattice} and with fluctuation parameter $s \geq 0$. From \cite{K86, KKS75_brief},  it follows that there is a random variable $W_s$ and scaling factors $(a_n = a_{n,s},b_n = b_{n,s}, n \in \bb{N})$ for which
\begin{equation}\label{s-scaling-limit}
    \frac{Z_n - b_n }{a_n} \to W_s.
\end{equation}
Each $s\geq 0$ has its own scaling factors, centring prescriptions, and canonical limits, namely Sinai-Kesten law for $s = 0$, second-kind Mittag-Leffler laws for $s \in (0,1)$, stable laws for $s \in [1,2)$, Gaussian laws for $s\geq 2$, or critical variants with logarithmic scaling corrections for $ s = 1,2$). 

{It is important to note that, due to the techniques of renewal theory employed in the study of the fluctuations of the hitting times, the results presented in \cite{KKS75_brief,K86} rely on the assumption that \begin{equation}\label{non-lattice}
    \text{the law of $\log \frac{1-\omega}{\omega}$ is non-lattice},
\end{equation} 
that is the support of $\log \frac{1-\omega}{\omega}$, when
$\omega$ is sampled according to the environment law $\alpha$, is not contained in any affine lattice $a+h\mathbb Z$, with $a\in\mathbb R$ and $h>0$. 
the support of $Y$ is not contained in any affine lattice $a+h\mathbb Z$, with $a\in\mathbb R$ and $h>0$.}

At a heuristic level, varying $s$ changes the balance between localization, where a few atypical stretches of the environment control the fluctuations, and homogenised behaviour, where many contributions accumulate and Gaussian limits emerge; critical regimes interpolate between these mechanisms and may display additional boundary effects.

The purpose of this Section is to organise the RWCRE results against this RWRE benchmark. Under resampling the evolution turns into a concatenation of RWRE blocks, so that the overall fluctuations depend not only on the regime of $s$ but also on how the resampling map interacts with the corresponding scaling of block displacements (which is captured by the balance growth of the sums of $\tau(n)$- scaled  increments $T_1,\dots,T_n$). As a consequence, different regimes require different technical hypotheses (for instance, varying ellipticity and integrability assumptions) and, crucially, different notions of regularity for the cooling map tailored to the relevant scaling. 
{In Section \ref{ssec:fluctuation_overview} we state a single theorem that gathers all limiting laws currently available across the six regimes, thereby providing a unified reference point for the state of the art.}
In Section \ref{sec:S3} we explain the proof strategies and guiding ideas behind this theorem, emphasising how the same small set of mechanisms reappears in different guises: block decompositions and interweaving, criteria for dominance of the last block, replacement arguments, point-process descriptions in the stable regimes, the balance of rescaled pieces, and process-level convergence tools.

The results reviewed here were obtained over a sequence of works. The recurrent case and the classical diffusive regime (including $s=0$ and $s>2$) are treated in \cite{AveChidCosdHol22}. The regime $s\in(1,2)$, which is the most delicate after the critical case $s=1$, is analysed in \cite{AvedCosPet23}. The sub-ballistic regime $s\in(0,1)$ and the critical Gaussian regime $s=2$ are covered in \cite{dCosPetXie23b}.
Functional limit theorems { in RWCRE, when the $X^{(j)}$'s that make up $\mathfrak{X}$, see \eqref{underlying}, follow RWRE in the Sinai regime ($s =0$) and the cooling maps are polynomial ($T_k \sim \beta k^{\beta-1}$) or exponential ($T_k \sim Ck$),} 
are developed in \cite{Yon19}. 
{In Section \ref{subsec:recfl}} we state a single theorem that summarises the known fluctuation limits across these regimes (with $s=1$ the only case not addressed in the present state of the art), and we indicate how the scaling and the limiting laws change as the parameter $s$ varies.


\subsection{Framework and Notation}
\label{ssec:fluctuation_overview}

The goal of fluctuation theory is to describe the scaling limits of the position
$X_n$ as $n\to\infty$, that is, the weak limits of suitably normalised and centred
random variables of the form
\[
\mc{X}_n: =\frac{X_n-b_n}{a_n},
\]
for deterministic sequences $(a_n)_{n\ge1}$ with $a_n>0$ and $(b_n)_{n\ge1}$.
When a full limit does not exist, one seeks instead a complete description of
the set of subsequential limits,  limit points, of $\big((X_n-b_n)/a_n\big)_{n\ge1}$.
To determine the normalisation  and centering $(a_n, b_n)_{n \geq 1}$ is already a
delicate matter for RWRE and, in the cooling setting, becomes substantially more
flexible: for each fixed RWRE regime,encoded by the parameter $s$, the choice of
$(a_n,b_n)_{n \geq 1}$ depends on the resampling map $\tau$, and may
interpolate between the static RWRE behaviour, when $\tau$ is such that few large blocks dominate, and
a homogenised behaviour, when $\tau$ is such that many blocks contributing on the relevant scale.

As a first step we we take the centering term $b_n:=\bb{E}[X_n]$,
and we focus on identifying a suitable normalising sequence $(a_n)_{n\in\bb{N}}$.
This choice is essentially unique, in the sense that replacing $(a_n)$ by another
normalisation can only rescale subsequential limits by a deterministic constant.

More precisely, let $(a_n)_{n\in\bb{N}}$ and $(a_n')_{n\in\bb{N}}$ be two sequences with
$a_n>0$ and $a_n'>0$, and define the centred-and-scaled variables
\[
\mc{X}_n:=\frac{X_n-\bb{E}[X_n]}{a_n},
\qquad
\mc{X}_n':=\frac{X_n-\bb{E}[X_n]}{a_n'} \;=\; \frac{a_n}{a_n'}\,\mc{X}_n.
\]
Suppose there exists a subsequence $(n_k)_{k\ge1}$ such that
\[
\mc{X}_{n_k}\ \Rightarrow\ \mc{X}_*,
\qquad\text{with $\mc{X}_*$ non-degenerate,}
\]
and assume that along the same subsequence the ratio of scales converges,
\[
\frac{a_{n_k}}{a_{n_k}'} \longrightarrow A_* \in [0,\infty].
\]
Then
\[
\mc{X}_{n_k}' \ =\ \frac{a_{n_k}}{a_{n_k}'}\,\mc{X}_{n_k}
\ \Rightarrow\ A_*\,\mc{X}_*.
\]
In particular, whenever $A_*\in(0,\infty)$ the two normalisations lead to the
same family of limit points up to deterministic multiplication. If $A_*  = 0$ then the limit distribution of $(\mc{X}_{n_k}', k \in \bb{N})$ is $\delta_0 $ which is the  distribution of a degenerate random variable. If $A_*  = \infty$ then the limit distribution of $(\mc{X}_{n_k}', k \in \bb{N})$ is not a probability distribution. Thus, once one
has identified a scale $a_n = a_n(\mathfrak{X},\tau) = a_n(s,\tau)$, as a function of the cooling map $\tau$ and the RWRE pieces or, better yet, its
parameter $s$, the remaining task is to describe the
resulting subsequential limits and to determine when the centering $\bb{E}[X_n]$
may be replaced by a more explicit approximation (such as a linear drift) without
changing the limiting laws.

\noindent{\bf Fluctuation decomposition and mass balance.}
Let $\mathfrak{X}$ be a family of underlying processes as in \eqref{underlying} and consider a resampling map \eqref{basic-resampling-tau}.  It follows from \eqref{cooling-general-tau} that 
\begin{equation}\label{fluctuation-basic-decomposition}
\begin{aligned}
    \mc{X}_n = \mc{X}_n(\mathfrak{X}, \tau) &:=\frac{X_n - \bb{E}[X_n]}{a_n} = \sum_{k} \frac{X^{(k)}_{T_{k,n}} - \bb{E}[X^{(k)}_{T_{k,n}}]}{a_n},
\end{aligned}
\end{equation}
where $ T_{k,n,\tau} = T_{k,n} := (\tau(k)\wedge n)-(\tau(k-1) \wedge n)$, recall \eqref{gamma-def}.
Once we consider the basic decomposition of $\mc{X}_n$ into independent pieces as in \eqref{fluctuation-basic-decomposition} we can shift our attention to pieces and their limiting behaviours. From classical RWRE theory, see \cite{S82, KKS75_brief,Zei04}, it follows that for each $s\geq 0$ there is $\alpha_T =\alpha(s,T)$ such that $(X^{(k)}_T - \bb{E}[X^{(k)}_T])/\alpha_T \to W_s$ 
where $W_s$ denotes the corresponding standard RWRE fluctuation limit (Sinai--Kesten,
Mittag--Leffler, stable, or Gaussian, depending on $s$)

{Motivated by this, we rewrite \eqref{fluctuation-basic-decomposition} as $\mc{X}_n
=\sum_{k\geq 1}\lambda_{k,n}\,\mc{Y}_{k,T_{k,n}}$, where
\begin{equation}\label{fluctuation-mass-balance}
\mc{Y}_{k,T}:=\frac{X^{(k)}_T-\bb{E}[X^{(k)}_T]}{\alpha(k,T, \tau)},
\;
\lambda_{k,n,\tau}=\lambda_{k,n}:=\frac{\alpha(k,T, \tau)}{a_n}.
\end{equation}
The coefficients $\lambda_{n,\tau} =\lambda_n = (\lambda_{k,n})_{k\ge1}$ form the \emph{fluctuation mass profile} at time $n$. 
The vector $\lambda_n$ quantifies how much each block contributes on the global scale $a_n$. In the following sections we shall see that for $s \in (1,2] $ the scaling parameter $\alpha( k,T, \tau)$  depends on the map $\tau$. On the other hand for $s \in \bb{R} \setminus [1,2]$ the scaling parameter does not depend on $\tau$. Indeed, for $s \in \bb{R} \setminus [1,2]$ one can take the diffusive scaling $\alpha( k,T, \tau) =\alpha_T = c_T\sqrt{T}$ with $\lim_{T} c_T = c_s \in (0,\infty)$. 
Determining the correct normalisation $a_n$ is therefore equivalent to identifying $\alpha( k,T, \tau)$, that is, identifying
the scale on which the profile $\lambda_{k,n}$ has a non-trivial mass balance,
i.e.\ neither collapses to a single block nor spreads too thinly for all blocks to vanish.
This is precisely where the cooling map enters in an essential way: through the array
$(T_{k,n})_{k\ge1}$, the map $\tau$ shapes the profile $\lambda_{k,n}$ and hence
controls both the correct scaling and the possible limiting laws.}

\subsection{Scaling limit via variance scaling for the recurrent, sub-ballistic and diffusive regimes:
\texorpdfstring{$s \in [0,1)\cup(2,\infty)$}{s in [0,1) or in (2,infinity)}}\label{Ssec:Replacement_works}

\label{subsec:recfl}

In this section we collect the  statements that enable us to identify the scaling limits of RWCRE for the recurrent regime ($s = 0$), for the sub-ballistic transient regime($s \in (0,1)$), and for the diffusive regime ($s>2$)  that is, the cases when the underlying processes $X^{(k)}$ that make up $\mathfrak{X}$ in \eqref{underlying} is distributed as a RWRE that satisfies $\bb{E}[\log \rho] = 0$ or $\bb{E}[\log \rho] <0$ and $\bb{E}[\rho^s] = 1$ with $s \in (0,1) \cup (2, \infty)$. 

In \cite{AveChidCosdHol22} and \cite{dCosPetXie23b} it is shown that, when $s \in [0,1) \cup (2, \infty)$, the convergence of RWRE occurs in $L^2$ and therefore the scaling of each piece simplifies and can be taken as a function of the size $T_{k,n}$. 
In this case the variance, $\var(X) := \bb{E}[(X - \bb{E}[X])^2]$, is the key to determine the scaling limits: we set  
\begin{equation}\label{alpha-a-var-scaling}
\alpha(k,T,\tau) := \sqrt{\var(X^{(k)}_T)} \quad \text{ and } \quad a_n := \sqrt{\var(X_T)}.
\end{equation}
Theorem~\ref{thm:rlpts} below gives a characterisation of the possible limit points (as mixtures of the limits laws $W_s$ with a Gaussian random variable $Z$.

From the choice of $\alpha(k,T,\tau)$, $a_n$, and \eqref{fluctuation-mass-balance} we obtain that 
\begin{equation}\label{variance-lambda-scaling}
\lambda_{\tau,n}(k) = \sqrt{\var(X^{(k)}_T)}/\sqrt{\var(X_T)}.
\end{equation}
By the independence of the components of $\mathfrak{X}$, see \eqref{eq:X_dec}, it follows that $\lambda_{\tau,n}$ is a vector with unit $\ell_2(\bb{N})$-norm, i.e., $\norm{\lambda_{\tau,n}}_{2}^2 := \sum_{k\in\bb{N}} \lambda_{\tau,n}(k)^2 = 1$. 
Let $(W_j)_{j \in \bb{N}}$ be  a family of i.i.d.\ random variables with same law as $W_s$, a random variable with finite variance when $s \in [0,1) \cup (2,\infty)$, and let $\sigma_s: = \sqrt{\var(W_s)}$. 
Define for $\lambda = \prt{\lambda(j)}_{j \in \bb{N}} \in \ell_2(\bb{N})$,  the $\lambda$-mixture of normalised $W$ random variables by
\begin{equation} \label{e:Smix} 
    W^{\otimes \lambda} := \sum_{j\in\bb{N}} \lambda(j) (\sigma_s^{-1}W_j)= \lim_{n \to \infty} \sum_{j=1}^n\lambda(j) (\sigma_s^{-1}W_j),
\end{equation}
where the above (almost sure) limit  is well defined  from the convergence in $L^2$ of the partial sums.
We present in a single framework the content of Theorem 2 in \cite{AveChidCosdHol22}, which gives the scaling limits of RWCRE in the case $s = 0$ and $s>2$, and Theorem 1.3 in \cite{dCosPetXie23b}, which gives the scaling limits of RWCRE in the case $s \in(0,1)$. 

For $\lambda \in \ell_2(\bb{N})$, let $\lambda^\downarrow$ be the vector obtained from $\lambda$ by reordering the entries of $\lambda$ in decreasing order, see . Consider the equivalence relation $\lambda\sim \lambda'$ when $\lambda^\downarrow =\lambda'^\downarrow$ and put $[\lambda] := \chv{\lambda' \in \ell_2(\bb{N})\colon \lambda' \sim \lambda }$. 
In what follows, $(n_i)_{i\in\bb{N}}$ denotes a strictly increasing sequence of integers.

\begin{theorem}[{\bf Limit distributions for RWCRE for $s \in[0,1)\cup (2,\infty)$}] \label{thm:rlpts}
Assume $\mu = \alpha^{\bb{Z}}$ satisfies \eqref{eq:mu_iid}, with $\alpha$ a uniformly elliptic probability measure on $(0,1)$, \eqref{eq:ue}, non-lattice, \eqref{non-lattice}, and fluctuation parameter $s \in [0,1) \cup (2,\infty)$. Assume that $(X^{(k)}, k \in \bb{N})$ are independent and that $ X^{(k)}= (X^{(k)}_n, n \in \bb{Z}_+)$ is distributed according to\eqref{eq:annealed_rwre}, $ X^{(k)}$ is RWRE under the annealed law $\mu$ fixed above. Let $\tau$ be a resampling map, as in  \eqref{cooling-general-tau} and let $ \mathfrak{X}=\prt{\mathfrak{X}_n, n\in\bb{Z}_+}$ be the sequence with $\mathfrak{X}_n$ as defined in \eqref{fluctuation-basic-decomposition} when $a_n$ is given by \eqref{alpha-a-var-scaling}. Then the sequence $\mathfrak{X}$ is tight in the weak topology and its limit points are characterised as follows. Let $\lambda_{\tau,n} = (\lambda_{\tau,n}(k), k \in \bb{N})$ with $\lambda_{\tau,n}(k)$ as defined in \eqref{variance-lambda-scaling}. If $(n_i)_{i\in\bb{N}_0}$ is such that
\begin{equation}\label{diagc}
\lim_{i\to\infty} \lambda^{\downarrow}_{\tau,n_i}(k) =: \lambda_*(k) \qquad \forall\,k \in \bb{N}_0,
\end{equation}
then $\lambda_* = (\lambda_*(k), k \in \bb{N}) \in \ell_2(\bb{N})$ and
\begin{equation}\label{lpoints0}
\mathfrak{X}_{n_i}
\todtwo
W_s^{\otimes \lambda_*}
+ a(\lambda_*)\,\Phi \qquad \forall\,p>0,
\end{equation}
where $a(\lambda_*) := (1 - \|\lambda_*\|_2^2)^{\tfrac12}$, $\Phi$ is a standard normal random variable, and $W_s^{\otimes \lambda_*} $ is as in \eqref{e:Smix}.
\end{theorem}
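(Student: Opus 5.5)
Throughout, write $\mathcal{Y}_{k,T}:=(X^{(k)}_T-\bb{E}[X^{(k)}_T])/\sqrt{\var(X^{(k)}_T)}$, so that $\mathfrak{X}_n=\sum_k\lambda_{\tau,n}(k)\,\mathcal{Y}_{k,T_{k,n}}$ with $\|\lambda_{\tau,n}\|_2=1$ and the summands independent. The plan is to reduce everything to this weighted-sum representation and split the blocks into ``macroscopic'' and ``microscopic'' ones.

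\textbf{Tightness and $\lambda_*\in\ell_2$.} Tightness is immediate, since $\bb{E}[\mathfrak{X}_n^2]=1$ for every $n$. Next fix a subsequence along which \eqref{diagc} holds. By Fatou, $\sum_k\lambda_*(k)^2\le\liminf_i\sum_k\lambda^{\downarrow}_{\tau,n_i}(k)^2=1$, so $\lambda_*\in\ell_2(\bb{N})$. Relabel the blocks at time $n_i$ so that $\lambda_{\tau,n_i}$ is decreasing; this is legitimate because the summands are independent and their order does not affect the law of the sum.

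\textbf{Splitting the sum.} Fix $K$ and write
\[
\mathfrak{X}_{n_i}=\sum_{k\le K}\lambda^{\downarrow}_{\tau,n_i}(k)\,\mathcal{Y}_{k}+\sum_{k>K}\lambda^{\downarrow}_{\tau,n_i}(k)\,\mathcal{Y}_{k}=:A_{i,K}+B_{i,K}.
\]
\emph{Head.} If $\lambda_*(k)>0$, the corresponding block length must diverge: $\var(X_T)\asymp T$ in all three regimes, so a block of bounded length has variance $O(1)$, which is $o(\var(X_{n_i}))$ once $\var(X_{n_i})\to\infty$. The degenerate case where $\var(X_{n_i})$ stays bounded forces a bounded number of bounded blocks and is handled by a separate compactness argument. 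For diverging lengths, the RWRE input quoted in the paper (convergence in $L^2$ of normalised RWRE, from \cite{AveChidCosdHol22,dCosPetXie23b}) gives $\mathcal{Y}_{k,T}\to\sigma_s^{-1}W_s$ in law with convergence of second moments. By independence, $A_{i,K}\Rightarrow\sum_{k\le K}\lambda_*(k)\sigma_s^{-1}W_k$, and the case $\lambda_*(k)=0$ contributes nothing.

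\emph{Tail.} $B_{i,K}$ is a sum of independent, centred, unit-variance pieces with total variance $v_{i,K}:=1-\sum_{k\le K}(\lambda^{\downarrow}_{\tau,n_i}(k))^2$, which converges to $1-\sum_{k\le K}\lambda_*(k)^2$. Every coefficient satisfies $\lambda^{\downarrow}_{\tau,n_i}(k)\le\lambda^{\downarrow}_{\tau,n_i}(K)$, and $\lambda^{\downarrow}_{\tau,n_i}(K)\to\lambda_*(K)$, which is small for large $K$ because $\lambda_*\in\ell_2$. Apply Lindeberg--Feller to the array. The Lindeberg condition requires
\[
\sup_{T}\,\bb{E}\big[\mathcal{Y}_{k,T}^2\,\Ind{|\mathcal{Y}_{k,T}|>\varepsilon/\delta}\big]\to0 \quad\text{as } \delta\to0,
\]
that is, uniform integrability of $\{\mathcal{Y}_{k,T}^2\}_T$. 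This follows from the $L^2$ convergence for large $T$ together with the trivial bounds $|Z_T|\le T$ for the finitely many small $T$. Hence $B_{i,K}$ is close in law to a centred Gaussian with variance $1-\sum_{k\le K}\lambda_*(k)^2$. The precise statement is a distance bound in a metric such as Wasserstein-2, up to an error $\eta(\lambda_*(K))$ with $\eta(\lambda_*(K))\to0$ as $K\to\infty$. The same bound also covers the case where the tail has infinitely many tiny pieces.

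\textbf{Combining and the main obstacle.} Since $A_{i,K}$ and $B_{i,K}$ are independent, combine them and send $i\to\infty$ and then $K\to\infty$. Both $\sum_{k\le K}\lambda_*(k)\sigma_s^{-1}W_k\to W_s^{\otimes\lambda_*}$ and $1-\sum_{k\le K}\lambda_*^2\to a(\lambda_*)^2$ converge in $L^2$. Since all second moments stay bounded and the squares are uniformly integrable, the weak convergence upgrades to $W_2$/$L^2$-type convergence, giving \eqref{lpoints0}.

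The main obstacle is making the double limit uniform: obtaining a quantitative Lindeberg or Wasserstein bound for the tail that depends only on the largest coefficient $\lambda^{\downarrow}_{\tau,n_i}(K)$ and not on the number of blocks. This bound must also be uniform over the tail blocks, whose lengths $T_{k,n_i}$ are arbitrary and may be short. I would first try a Lindeberg swapping estimate, replacing each $\mathcal{Y}_{k}$ in turn by a Gaussian. Its error is controlled by $\sup_T\bb{E}[\mathcal{Y}_T^2\wedge|\mathcal{Y}_T|^3\delta]$, which is exactly where the uniform integrability coming from the $L^2$ convergence for $s\in[0,1)\cup(2,\infty)$ is essential; this uniform integrability fails for $s\in(1,2]$.
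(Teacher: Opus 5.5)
Your proof is correct, but it takes a different route from the paper's. The paper simply invokes the replacement method of \cite{dCosPetXie23b} (Thm.~1.7 there). Its only model-specific input is the $L^2$ convergence of the normalised RWRE displacement: Appendix~C of \cite{AdH17springer} for $s=0$, Thm.~1.10 of \cite{dCosPetXie23b} for $s\in(0,1)$, and Thm.~3 of \cite{AveChidCosdHol22} for $s>2$. In that method, normalised increments on long blocks are coupled with copies of their limit law at small $L^2$ distance. By independence and centering, $\mathfrak{X}_n$ is then $L^2$-close to the weighted sum $\sum_k\lambda_{\tau,n}(k)\sigma_s^{-1}W_k$ of i.i.d.\ limit variables. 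Blocks too short to be replaced have uniformly vanishing weights and only feed the Gaussian part. The limit points are then read off from a model-free statement about such $\ell_2$-mixtures when $\lambda^{\downarrow}_{\tau,n}\to\lambda_*$.

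You never replace anything. Instead:
\begin{itemize}
\item you split by coefficient size rather than by block length;
\item you use only convergence in law on the top $K$ blocks;
\item you apply Lindeberg--Feller directly to the RWRE increments in the tail, with the Lindeberg condition supplied by uniform integrability of $\{\mathcal{Y}_T^2\}_{T\ge 1}$.
\end{itemize}
That uniform integrability is exactly the content of the $L^2$ input, and exactly what fails for $s\in(1,2]$. Your route is more self-contained: no couplings are needed, and short blocks need no separate treatment because they sit in the tail. The replacement route is more modular and produces $L^2$-closeness directly.

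The uniformity you flag as the main obstacle is not a real one. The Lindeberg-swapping (or characteristic-function) error for the tail is bounded by a function of $u$, the largest tail coefficient, and the uniform-integrability modulus of $\{\mathcal{Y}_T^2\}_{T\ge1}$. It does not depend on the number of blocks. Alternatively, let $K=K_i\to\infty$ slowly so that $\lambda^{\downarrow}_{\tau,n_i}(K_i)\to 0$, which makes the tail an honest Lindeberg array. The final upgrade to $W_2$ is immediate, since $\bb{E}[\mathfrak{X}_{n_i}^2]=1=\bb{E}[(W_s^{\otimes\lambda_*}+a(\lambda_*)\Phi)^2]$.

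Two inaccuracies should be fixed.
\begin{itemize}
\item $\var(Z_T)\asymp T$ holds only for $s>2$. It is of order $(\log T)^4$ for $s=0$ and of order $T^{2s}$ for $s\in(0,1)$. Your argument only needs $\var(Z_T)\to\infty$ and $\inf_{T\ge 1}\var(Z_T)>0$.
\item The ``degenerate case'' with $\var(X_{n_i})$ bounded never occurs. Suppose $\var(X_n)$ stayed bounded along a subsequence. Each non-empty block contributes at least $\inf_{T\ge1}\var(Z_T)>0$, so the number $N_n$ of non-empty blocks would stay bounded. Since $\sum_k T_{k,n}=n$, this forces $\max_k T_{k,n}\ge n/N_n\to\infty$, hence $\var(X_n)\to\infty$, a contradiction. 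You should say this rather than appeal to a compactness argument. If that case could occur, the limit would be a finite normalised sum of lattice variables, not of the form \eqref{lpoints0}, so no compactness argument could handle it.
\end{itemize}
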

\begin{proof}
    The proof of this results follows from the replacement method explained in Section 1.2.1 of \cite{dCosPetXie23b}, see in particular Theorem 1.7 of \cite{dCosPetXie23b}. An important step that is required in each case is the control of the $L^2$ convergence of RWRE which is proved in the case $s = 0$ in Appendix C of \cite{AdH17springer}, in the case $s \in (0,1)$ in Theorem 1.10 in \cite{dCosPetXie23b}, the case $s>2$ is covered in Theorem 3 of  \cite{AveChidCosdHol22}.
\end{proof}
\begin{remark}\label{R:fatou}\rm{}
We note that if one is allowed to take subsequences, then condition \eqref{diagc} is not restrictive. Indeed, since for any $k \in \bb{N}_0$ and $n \in \bb{N}_0$ the value $\lambda^{\downarrow}_{\tau,n}(k)$ belongs to $[0,1]$, if we take a (diagonal) subsequence $(n_i)_{i \in \bb{N}}$, then condition \eqref{diagc} will be satisfied for some vector $\lambda_*$. By Fatou's lemma:
  \begin{equation}\label{Fatousub}
    \sum_{k \in \bb{N}} \lambda^2_*(k) =    \sum_{k \in \bb{N}} \liminf_{i \in \bb{N}}\prt{\lambda^{\downarrow}_{\tau,n_i}(k)}^2 \leq \liminf_{i \in \bb{N}} \sum_{k \in \bb{N}} \prt{\lambda^{\downarrow}_{\tau,n_i}(k)}^2 = 1,
  \end{equation}
  which guarantees that $\lambda_* \in \ell_2(\bb{N})$ and so $W_s^{\otimes \lambda_*}$ in  \eqref{lpoints0} is well defined. With this it follows that Theorem~\ref{thm:rlpts} characterizes \emph{all limit points} of  $\prt{\mathfrak{X}_n, n\in\bb{Z}_+}$.  
\end{remark}

\begin{remark}
    It is possible to capture the transition between homogeneous RWRE and static RWCRE in the fluctuations by exploring different choices of $\tau(n)$ with different growth patters. In such cases we can describe how the scaling factors $a_n,b_n$ and scaling limits $W^\otimes \lambda_*$ depend on $\tau$ through a calculation of the value of $\var (X_n)$ in terms of $n$ and $\tau$. In \cite{AveChidCosdHol22}, the authors have shown, for the case $s = 0$, in Examples (Ex.3) -- (Ex.6) the transition in $(a_n,b_n)$ and limit points that occur when we move from ``polynomial'', with $T_k \sim B k^\beta$, to ``exponential'', with $T_k \sim \exp(ck)$, ``double exponential'', with $T_k \sim \exp (\exp (ck))$, and to ``triple exponential'', with $T_k \sim \exp(\exp(\exp(ck)))$, cooling maps.    

    Also interesting is the exploration of possible limit points one can obtain by changing $\tau$, in Example 5.5 of \cite{dCosPetXie23b} it is shown, for the case $s \in (0,1)$, how to obtain arbitrary mixtures of $W_s$ and Gaussian random variables for subsequences of $\mathfrak{X}_n$. 

    When $s>2$ the family $\mathfrak{X}_n$ admits a unique limit point, namely the  Gaussian random variable. This is due to the fact that for $s>2$ $W_s$ is a Gaussian randon variables and variance $1$ mixtures of independent copies of $W_s$ have the standard Gaussian distribution. However, we point out that the scaling factors $(a_n,b_n)$ may depend of the choice of $\tau$. In order to obtain convergence of the sequence with $a_n \sim \sqrt{n}$ it is required that $\var(X_{n_i})/n_i$ converges along the sequence $(n_i, i \in \bb{N})$. When $\tau$ is a cooling map, satisfying \eqref{effective-cooling}, then $a_n\sim \sigma_s \sqrt{n}$ with $\sigma^2_s = \lim_n \var(Z_n)/n$. Though the diffusive scaling, $a_n \sim \sqrt{n}$, holds for any map for which  $\var(X_{n_i})/n_i$ converges it is worth pointing out that the centering term $(b_n)$ can not in general be determined, see example (Ex.8) in \cite{AveChidCosdHol22}. However, if the limit vector $\lambda_*$ determined by \eqref{diagc} satisfies 
    \[
    \sup_n \sum_{k} \lambda_{\tau,n}(k) <\infty
    \]
    then one may take $b_n = v_\mu n$ where $v_\mu  = \lim_n \bb{E}[Z_n]/n$ is the speed of the underlying RWRE.
\end{remark}

\subsection{Fluctuations when the replacement method fails, the ballistic and critically diffusive regimes:
\texorpdfstring{$s \in (1,2]$}{s in (1,2]}}
\label{Ssec:replacement_fails}
\label{sec:S3}
\subsubsection{The critical Gaussian regime:
\texorpdfstring{$s = 2$}{s = 2}}

The key to the regime $s=2$ is the control of the second moment. In this case, the underlying RWRE still has a Gaussian scaling limit, but it sits at the borderline where the second moment already feels the contribution of moderate slowdown probabilities. More precisely, if $Z$ is a $2$-regular RWRE, then there exists a constant $b>0$ such that
\begin{equation}\label{rwre-s2-gaussian-limit-survey}
\frac{Z_n-nv}{b\sqrt{n\log n}}\tod \mc{N},
\end{equation}
where $\mc{N}$ is a standard Gaussian random variable. At the same time, there exists a constant $K_0>0$ such that the one-sided moderate slowdown asymptotics
\begin{equation}\label{rwre-s2-moderate-slowdown-survey}
P^\mu_0(Z_n-nv<-x)\sim K_0 (nv-x)x^{-2},
\qquad \sqrt{n\log^3 n}\le x\le nv-\log n,
\end{equation}
hold uniformly in the indicated range.

These slowdown estimates generate an additional contribution to the second moment, yielding
\begin{equation}\label{rwre-s2-second-moment-survey}
\lim_{n\to\infty}E^\mu_0\!\left[\left(\frac{Z_n-nv}{\sqrt{n\log n}}\right)^2\right] = b^2+K_0v.
\end{equation}
In particular,
\begin{equation}\label{rwre-s2-mean-var-survey}
\frac{E^\mu_0[Z_n]-nv}{\sqrt{n\log n}}\to 0,
\qquad
\frac{\Var(Z_n)}{n\log n}\to b^2+K_0v.
\end{equation}
Hence the convergence in \eqref{rwre-s2-gaussian-limit-survey} cannot be improved to $L^2$-convergence: the variance asymptotic is strictly larger than the square of the scaling constant appearing in the limiting theorem. Equivalently, one may rewrite the Gaussian limit as
\begin{equation}\label{rwre-s2-variance-restatement-survey}
\frac{Z_n-E^\mu_0[Z_n]}{\beta\sqrt{\Var(Z_n)}}\tod \mc{N},
\qquad
\beta=\frac{b}{\sqrt{b^2+K_0v}}<1.
\end{equation}

This mismatch between the RWRE scaling and the variance is the basic phenomenon behind the critically diffusive RWCRE regime: already at the RWRE level, Gaussian fluctuations persist, but they are not captured by the full standard deviation.

\subsubsection{The ballistic regime: \texorpdfstring{$s \in (1,2)$}{s in (1,2)}}
The replacement method employed in Theorem \ref{thm:rlpts} depends on the convergence in $L^2$ of RWRE. When the fluctuation parameter $s \in [1,2]$, convergence in $L^2$ does not hold. Indeed, for $s \in [1,2)$ the scaling limit $W_s$, as established in \cite{KKS75_brief}, is a stable law that does not admit a second moment. In this regime one can at best prove convergence of the RWRE in $L^p$ for $p<s$; see also \cite{AvedCosPet23}.

When $s \in (1,2)$, the scaling of convergence of RWRE is of order $n^{1/s}$, that is,
\[
n^{-1/s}(Z_n-\bb{E}[Z_n]) \tod W_s,
\]
where $W_s$ is the stable random variable (mean zero, totally skewed to the left, with scale parameter $b>0$) with characteristic function
\[
\bb{E}[\exp(iu W_s)] 
= 
\exp\bigg[ -b|u|^s\bigg(1 + i \frac{u}{|u|} \tan \Big(\frac{s \pi}{2}\Big)\bigg)\bigg], 
\qquad u \in \bb{R}.
\]
On the other hand, as shown in Theorem 3.8 of \cite{AvedCosPet23},
\[
\var(Z_n)\asymp n^{3-s},
\]
so that the standard deviation is of order $n^{(3-s)/2}$, which is larger than $n^{1/s}$. Thus, in the range $s\in(1,2)$, there are two natural candidate scales for fluctuations of RWCRE, namely the variance scale and the stable scale.

When the cooling map is slow enough, as captured by the condition
\begin{equation}\label{slow-cooling-s-stable}
\lim_n \sup_{k \leq n} \frac{T_k}{\big(\sum_{k = 0}^n T_k^{3-s}\big)^{1/2}} = 0,
\end{equation}
homogenization occurs for the fluctuations of $\mathfrak{X}$, and the choice of the variance scaling yields
\begin{equation}\label{normal-limit-s-stable-slow-cooling}
    \frac{X_n - \bb{E}[X_n]}{\sqrt{\var(X_n)}} \tod \mc{N},
\end{equation}
where $\mc{N}$ is a standard Gaussian distribution. 
The interpretation of \eqref{slow-cooling-s-stable} is that no single block is large compared to the total Gaussian fluctuation scale; hence many blocks contribute, each one weakly. This is the structural reason for the Gaussian limit; see Theorem 3.2 of \cite{AvedCosPet23}.

For the complementary regime, it is natural to compare the block lengths $T_k$ with the stable scale $\tau(n)^{1/s}$. To this end, one considers cooling maps for which the asymptotic profile exists:
\begin{equation}\label{g-profile-s-stable}
g(x):=\lim_{n\to\infty}\frac{\sum_{k=1}^n T_k \mathbf{1}_{\{T_k<x\,\tau(n)^{1/s}\}}}{\tau(n)},
\qquad x>0.
\end{equation}
Thus $g(x)$ records the asymptotic fraction of the total time $\tau(n)$ spent in blocks whose lengths are smaller than $x\,\tau(n)^{1/s}$, that is, in blocks which are subcritical on the $s$-stable scale. In particular,
$
g(\infty):=\lim_{x\to\infty}g(x)\in[0,1]
$
measures the total proportion of time carried by blocks which are at most of order $\tau(n)^{1/s}$. Hence the condition $g(\infty)=0$ expresses a fast-cooling regime: asymptotically, only blocks much larger than the stable threshold $\tau(n)^{1/s}$ contribute to the total running time.

Associated with the profile $g$ is the Lévy density $\lambda_g:(-\infty,0)\to[0,\infty)$ given by
\begin{equation}\label{lambda-g-definition-survey}
\lambda_g(-t)
=
K_0 t^{-s}\int_{t/\nu_\mu}^{\infty}
\left(\frac{\nu_\mu^s}{t}-\frac{s-1}{x}\right)\,g(dx),
\qquad t>0,
\end{equation}
and we denote by $W_{\lambda}$ the infinitely divisible random variable with characteristic function
\begin{equation}\label{W-lambda-charfun}
\bb{E}\big[e^{itW_{\lambda}}\big]
=
\exp\left(
\int_{-\infty}^0
\big(e^{itx}-1-itx\big)\lambda(x)\,dx
\right),
\qquad t\in\bb{R}.
\end{equation}
The law $W_{\lambda_g}$, referred to as a tempered stable law, describes the infinitely divisible contribution generated by the subcritical part of the cooling profile.

Provided \eqref{g-profile-s-stable} holds, one can distinguish two sufficient conditions for stable-type limits:
\begin{itemize}
 \item[{\bf(S1)}] 
  $\sup_n \sum_{k=1}^n \frac{T_k^{1/s}}{\tau(n)^{1/s}}< \infty$,
  and 
  $\underset{n\to \infty}\lim \sum_{k=1}^n \frac{T_k^{1/s}}{\tau(n)^{1/s}} \ind{T_k<m}=0$ for all $m<\infty$.
 \item[{\bf(S2)}] 
  $\lim_{n\to\infty} \frac{\max_{k\leq n} T_k(\log T_k)^{4s}}{\tau(n)} = 0$.
\end{itemize}

Under condition {\bf(S1)}, the cooling map is fast enough so that the subcritical contribution is asymptotically negligible, and one recovers the pure RWRE stable law
\begin{equation}\label{stable-limit-S1-survey}
\frac{X_n-\bb{E}[X_n]}{n^{1/s}}\tod W_s.
\end{equation}
Under condition {\bf(S2)}, the limit depends on the full cooling profile $g$, and one has
\begin{equation}\label{stable-limit-S2-survey}
\frac{X_n-\bb{E}[X_n]}{n^{1/s}}
\tod
\begin{cases}
W_s, & \text{if } g(\infty)=0,\\[1mm]
W_{\lambda_g}+(1-g(\infty))^{1/s}W_s, & \text{if } g(\infty)\in(0,1],
\end{cases}
\end{equation}
where $W_s$ is independent of $W_{\lambda_g}$. Thus, when $g(\infty)\in(0,1]$, the subcritical part of the cooling profile contributes through the tempered stable component $W_{\lambda_g}$, while the remaining supercritical contribution produces the independent stable term $(1-g(\infty))^{1/s}W_s$.

The combination of the Gaussian regime \eqref{normal-limit-s-stable-slow-cooling} with the stable-type limits \eqref{stable-limit-S1-survey}--\eqref{stable-limit-S2-survey} shows that, in the ballistic and critically diffusive range $s\in(1,2)$, the fluctuation behavior of RWCRE is governed by a competition between the Gaussian variance scale and the stable scale $n^{1/s}$.

From Theorem 3.1 in \cite{AvedCosPet23}, and as a consequence of the above discussion we can make the crossover explicit for polynomial cooling maps.
For polynomial cooling maps, the above dichotomy becomes a sharp trichotomy. More precisely, assume that the cooling increments have eventual polynomial growth,
\begin{equation}\label{polygrow}
  \lim_{k\to\infty} \frac{T_k}{A k^a} = 1,
  \qquad \text{for some } A,a \in (0,\infty).
\end{equation}
Then the fluctuation behavior of RWCRE undergoes a phase transition at the critical exponent $a_c=(s-1)^{-1}$. If $a<(s-1)^{-1}$, the cooling is slow enough for homogenization to prevail, and the fluctuations are asymptotically Gaussian:
\begin{equation}\label{gauss_slow}
  \frac{X_n - \bb{E}[X_n]}{Bn^{\beta} }
  \tod \mc{N},
\end{equation}
where
\begin{equation}\label{beta-poly-survey}
\beta := \frac{a(3-s)+1}{2(a+1)},
\end{equation}
and $B$ can be computed explicitly, through a somewhat complicated formula, see Theorem 3.1 in \cite{AvedCosPet23} for details.

At the critical value $a=(s-1)^{-1}$,  one obtains the generalized tempered stable limit
\begin{equation}\label{Xnstableliml}
  \frac{X_n - \bb{E}[X_n]}{n^{1/s}}
  \tod W_{\lambda_{c,r}},
\end{equation}
where $W_{\lambda_{c,r}}$ is defined by \eqref{W-lambda-charfun} with
\begin{equation}\label{lcr}
 \lambda_{c,r}(x) =  c|x|^{-s-1}(1+x/r)_+, \qquad x<0,
\end{equation}
for some $c,r \geq 0$.

Finally, if $a>(s-1)^{-1}$, the cooling is fast enough for the static RWRE stable mechanism to dominate, and one recovers the pure stable limit
\begin{equation}\label{Xnstablelim}
    \frac{X_n - \bb{E}[X_n]}{n^{1/s}}
    \tod W_s.
\end{equation}
Thus polynomial cooling makes explicit the crossover from Gaussian to stable fluctuations, with a critical generalized tempered stable law appearing exactly at the transition.

\appendix
\renewcommand\thesection{\Alph{section}}
\renewcommand\theequation{\Alph{section}.\arabic{equation}}
\setcounter{equation}{0}
\section{Proof of LDP theorem}
\label{app:LDP}
The first step in the proof of the annealed LDP is the derivation of the scaled cummulant generating function (s.c.g.f.). First we note that for $Z$, the static RWRE, the s.c.g.f. converges:
\begin{equation}\label{scgf-static}
\lim_T \frac{1}{T} \log \bb{E}[e^{\theta Z_T}] = \Lambda(\theta) \quad \text{ for all } \theta \in \bb{R}.
\end{equation}
Let $I_a: \bb{R} \to [0, \infty]$ be the rate function for RWRE $Z$ under the annealed law, whose formula can be found in \cite{ComGanZei99}.

For RWCRE $X$ the s.c.g.f. can be derived by an application of Toeplitz Lemma \cite[Lemma 1]{AdH17springer} together with \eqref{scgf-static}, that is, for all $\theta \in \bb{R}$,
\begin{equation}\label{scgf-cooling}
    \begin{aligned}
        \lim_n \frac{1}{n} \log \bb{E}[e^{\theta X_n}]&= \lim_n \sum_k \frac{T_{k,n}}{n} \frac{1}{T_{k,n}} \log \bb{E}[e^{\theta Z_{T_{k,n}}}] = \Lambda(\theta).
    \end{aligned}
\end{equation}
Equation \eqref{scgf-cooling} indicates that the RWCRE process $X$ satisfies the LDP  with same speed and rate function as the RWRE process $Z$. Indeed, if we knew that $X$ satisfies the LDP with a convex rate function, then by taking the Legendre transform of $\Lambda$ in \eqref{scgf-cooling} we would be able to identify the rate function. The remaining challenge is to prove that $X$ satisfies the LDP with a convex rate function.
In order to prove the annealed LDP for $X$ we will show that 
for every open set $O \subset \bb{R} $
\begin{equation}\label{Open-LDP}
    \liminf_{n \to \infty} \frac{1}{n} \log \bb{P}\bigg(\frac{X_n}{n} \in O\bigg) \geq - \inf_{x \in O} I_a(x),
\end{equation}
and that for every closed set $C \subset \bb{R}$
\begin{equation}\label{closed-LDP}
    \limsup_{n \to \infty} \frac{1}{n} \log \bb{P}\bigg(\frac{X_n}{n} \in C\bigg) \leq - \inf_{x \in C} I_a(x),
\end{equation}

Let $[[a]] := [a-1,a+1]$. Note that since $X_n \in \bb{Z}$ is a nearest neighbour random walk starting from $0$, there is $[[a]]_n \in [a-1,a+1]$ for which $\bb{P}(X_n = [[a]]_n \mid X_n \in [[a]]) = 1$ and thus, in order to obtain \eqref{Open-LDP} and \eqref{closed-LDP} we shall first prove that for every $x \in [-1,1]$
\begin{equation}\label{point-rate-function-notation}
\begin{aligned}
    I_a(x) &\leq \liminf - \frac{1}{n} \log \bb{P}(X_n  = [[nx]]_n) \\
    &\leq \limsup - \frac{1}{n} \log \bb{P}(X_n = [[nx]]_n) \leq I_a(x).
    \end{aligned}
\end{equation}

\paragraph{Proof of the upper bound in \eqref{point-rate-function-notation}.}
Since 
\[
\{X_n = [[nx]]_n\} \supset \{X_n = [[nx]]_n, X_{\tau(i)} = [[\tau(i)x]]_{\tau(i)}, i \leq \ell_n\},
\]
it follows that
\begin{equation}
\begin{aligned}
-\frac{1}{n} \log \bb{P}_0(X_n = [[nx]]_n) 
&\leq -\frac{1}{n} \log \bb{P}(Z_{T_{i,n}} = [[T_{i,n}x]]_{T_{i,n}}, i >0 ) \\
&\leq \sum_k \gamma_{k,n} \bigg[-\frac{1}{T_{k,n}}\log \bb{P}(Z_{T_{i,n}} = [[T_{i,n}x]]_{T_{i,n}})\bigg] \\
& \to I_a(x),
\end{aligned}
\end{equation}
where the last passage follows from Toepliz lemma, see \cite[Thm. 1.2.3, p.36]{Rev68} and \cite[Lemma 1]{AdH17springer},  together with the fact that $\lim_T \frac{1}{T} \log \bb{P}(Z_n = [[nx]]_n) = -I_a(x)$.
 \paragraph{Proof of the lower bound in \eqref{point-rate-function-notation}.}
The lower bound follows from an exponential Markov inequality
\begin{equation}
    \begin{aligned}
        \bb{P}(X_n = [[nx]]_n) \leq \min\{\bb{P}(X_n \geq [[nx]]_n), \bb{P}(X_n \leq [[nx]]_n)\}
    \end{aligned}
\end{equation}
Therefore  
\begin{equation}
    \begin{aligned}
\bb{P}(X_n = [[nx]]_n) &\leq      \bb{P}(X_n \geq [[nx]]_n) \leq \inf_{\theta>0}\frac{1}{e^{\theta [[nx]]_n}}\bb{E}(e^{\theta X_n}), \\
\bb{P}(X_n = [[nx]]_n)&\leq \bb{P}(X_n \leq [[nx]]_n) \leq \inf_{\theta<0}\frac{1}{e^{\theta [[nx]]_n}}\bb{E}(e^{\theta X_n}).
    \end{aligned}
\end{equation}
Now we note that $\lim_n \frac{1}{n} \log \bb{E}[e^{\theta X_n}] = \Lambda(\theta) =   I^*_a(\theta)$.
To conclude note that for $x>0$
\[
\liminf-\frac{1}{n} \log \bb{P}(X_n  \geq [[nx]]_n) \geq \sup_{\theta>0} \big( \theta x - I^*_a(\theta)) = I_a(x),
\]
and for $x<0$
\[
\liminf - \frac{1}{n} \log \bb{P}(X_n  \leq [[nx]]_n) \geq \sup_{\theta<0} \big( \theta x - I^*_a(\theta)) = I_a(x).
\]

\paragraph{Proof of LDP given \eqref{point-rate-function-notation}}In what follows we prove that \eqref{point-rate-function-notation} implies \eqref{Open-LDP} and \eqref{closed-LDP}.

To see \eqref{Open-LDP}, fix $O\subset \bb{R}$ an open set and  note that for every $x \in O$ there is $n_0$ such that for $n>n_0$ $\frac{[[nx]]_n}{n} \in O$. Now 
\begin{equation} \label{liminf-open-ldp}
\begin{aligned}
\liminf_{n\to\infty}\frac{1}{n}\log \bb{P}\left(\frac{X_n}{n}\in \mathcal O\right)
&= -\limsup_{n\to\infty}\left(-\frac{1}{n}\log \bb{P}\left(\frac{X_n}{n}\in \mathcal O\right)\right)\\
&\ge \inf_{x \in O}-\limsup_{n\to\infty}\left(-\frac{1}{n}\log \bb{P}\left(X_n=[[ nx]]_n\right)\right)\\
&= \inf_{x \in O}-I_a(x). 
\end{aligned}
\end{equation}

For \eqref{closed-LDP}, note that since $\bb{P}(X_n  \in[-n,n]\cap \bb{Z}) = 1$ it follows  that 
\[
\bb{P}\left(\frac{X_n}{n}\in C\right)
\le (2n+1)\,\sup_{x\in C}\bb{P}\left(X_n=[[nx]]_n\right),
\]
and thus
\begin{equation}
\label{proto-n-integer}    
\limsup_{n\to\infty}\frac{1}{n}\log \bb{P}_0\left(\frac{X_n}{n}\in C\right)
=
\limsup_{n\to\infty}\ \sup_{x\in C}\ \frac{1}{n}\log \bb{P}_0\left(X_n=[[nx]]_n\right).
\end{equation}

To prove \eqref{proto-n-integer},  it suffices to show that for every $\varepsilon>0$
\begin{equation}
\label{proto-closed-ldp}
\limsup_{n\to\infty}\ \sup_{x\in C}\ \frac{1}{n}\log \bb{P}\left(X_n=[[nx]]_n\right)
\leq  -\inf_{x\in C} I(x) + \varepsilon.
\end{equation}

We prove \eqref{proto-closed-ldp} by contradiction. if \eqref{proto-closed-ldp} is false, then for each $k$ we can find $n_k$ and $z_k \in C\cap [-1,1]$ for which 
\[
\frac{1}{n_k}\log \bb{P} \left(X_{n_k}=[[n_k z_{k}]]_{n_k}\right)
> -\inf_{x\in C} I(x) + \varepsilon.
\]
By taking a subsequence, we may consider that $z_k \to z \in C$ and therefore if we take $m_k = |[[n_k z]]_{n_k} - [[n_k z_{k}]]_{n_k}|$ and note that $\lim_k m_k/n_k = 0$ by an application of \eqref{e:switch_limit_scgf}, given in Lemma \ref{l:uc} below, it follows that 
\[
\begin{aligned}
-I(z)
&:= -\lim_{k\to\infty}\frac{1}{n_k}\log \bb{P}\left(X_{n_k - m_k}=[[n_k z]]_{n_k}\right)
\\
&\ge
-\lim_{k\to\infty}\frac{1}{n_k}\log \bb{P}\left(X_{n_k}=[[n_k z_{k}]]_{n_k}\right)
\geq 
-\inf_{x\in C} I(x) + \varepsilon,
\end{aligned}
\]
which is in contradiction with 
\[
I(z) \geq \inf_{x\in C} I(x) - \varepsilon.
\]

\begin{lemma}\label{l:uc}
If the environment law $\mu$ is uniformly elliptic, $\lim_k z_k = z$, and $\lim_k n_k =\infty$ then, for $m_k = |[[n_k z]]_{n_k} - [[n_k z_{k}]]_{n_k}|$ 
\begin{equation}\label{e:switch_limit_scgf}
\liminf_k  \frac{\log \bb{P}\prt{X_{n_k - m_k} =\crt{\crt{n_kz}}_{n_k}}
  -\log \bb{P}_0\prt{X_{n_k} =\crt{\crt{n_kz_k}}_{n_k}}}{n_k} \geq 0.
\end{equation}
\end{lemma}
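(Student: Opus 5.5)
The idea is a path-surgery lower bound. We will lower bound $\bb{P}(X_{n_k-m_k}=[[n_kz]]_{n_k})$ by the probability of a path that first reaches $[[n_kz_k]]_{n_k}$ at some time and then walks straight to $[[n_kz]]_{n_k}$. Uniform ellipticity \eqref{eq:ue} will make the cost of such straight stretches at most exponential in $m_k$. Since $m_k/n_k\to0$, because $|n_kz-n_kz_k|+2\geq m_k$ and $z_k\to z$, this cost is invisible at rate $n_k$.

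A direct concatenation does not work, because $X_{n_k-m_k}$ refers to a \emph{shorter} time horizon than $X_{n_k}$. So I would reverse the order of the surgery and shorten the path at the beginning or the end. Write $y_k=[[n_kz_k]]_{n_k}$ and $y=[[n_kz]]_{n_k}$, so that $|y-y_k|=m_k$. Consider the time $n_k-2m_k$ (or an analogous shift). Using the Markov property of the quenched kernel \eqref{eq:rwcre_kernel}, every path ending at $y_k$ at time $n_k$ passes at time $n_k-2m_k$ through some site $w$ with $|w-y_k|\le 2m_k$. Hence
\[
\bb{P}(X_{n_k}=y_k)\le (4m_k+1)\max_{w}\bb{P}(X_{n_k-2m_k}=w).
\]
From the maximizing $w$, the walk can reach $y$ in exactly $m_k$ further steps, up to parity, which the nearest-neighbour constraint and the choice of $[[\cdot]]_n$ fix. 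Here we must check that $|w-y|\le 3m_k$; I would adjust the window to $n_k-cm_k$ so that the remaining time exactly matches a feasible path to $y$ at time $n_k-m_k$. Along this forced path each step has quenched probability at least $\mathfrak{c}$ by \eqref{eq:ue}, uniformly in the environment, including across resampling times. Integrating over $\mu^{\bb{N}}$ then gives
\[
\bb{P}(X_{n_k-m_k}=y)\ge \mathfrak{c}^{cm_k}\,\bb{P}(X_{n_k-cm_k}=w)\ge \frac{\mathfrak{c}^{cm_k}}{4m_k+1}\,\bb{P}(X_{n_k}=y_k).
\]

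Taking logarithms and dividing by $n_k$ gives the difference in \eqref{e:switch_limit_scgf}. It is bounded below by $(cm_k\log\mathfrak{c}-\log(4m_k+1))/n_k\to0$, which yields the liminf $\ge0$.

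The main obstacle is the bookkeeping of times and parities. The intermediate time has to be chosen so that both the backward decomposition, with the max over $w$, and the forward forced path land exactly at time $n_k-m_k$ with matching parity. If an exact match fails, I would pad with back-and-forth steps, each of probability at least $\mathfrak{c}^2$. Finally, the bound should be done quenched, using the Markov property at a fixed time, and only afterwards annealed. This way the resampling structure $\tau$ plays no role beyond the uniform ellipticity bound.
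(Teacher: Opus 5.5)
\paragraph{Where the argument fails.} The failing step is the forced path from the maximizing $w$. If you cut at time $n_k-cm_k$, the constraint $|w-y_k|\le cm_k$ only gives $|w-y|\le (c+1)m_k$, but only $(c-1)m_k$ steps remain before time $n_k-m_k$. So for every $c$, the sites $w$ on the far side of $y_k$ cannot reach $y$ in time, and nothing prevents the maximum from being attained there. Back-and-forth padding only uses up time; it cannot make up a distance deficit.

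This is not a bookkeeping issue. The Markov property together with \eqref{eq:ue} bounds a probability at a \emph{later} time from below by one at an earlier time: $\bb{P}(X_{t+d}=u')\ge \mathfrak{c}^{d}\,\bb{P}(X_t=u)$ for $|u-u'|\le d$ with matching parity. By contrast, \eqref{e:switch_limit_scgf} needs a lower bound at the earlier time $n_k-m_k$ in terms of the later time $n_k$.

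Your argument also never uses $|z|<1$, and the statement is false at $z=1$. Take $z_k=1-2/n_k$. Then $[[n_kz]]_{n_k}=n_k$, $[[n_kz_k]]_{n_k}=n_k-2$ and $m_k=2$, so $\bb{P}(X_{n_k-2}=n_k)=0<\mathfrak{c}^{n_k}\le \bb{P}(X_{n_k}=n_k-2)$. Your final inequality fails here, and so does the lemma.

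\paragraph{Relation to the paper's proof and a repair.} The paper's proof does not close this gap either. Its display \eqref{UEllip}, $\bb{P}(X_n=x')\le \mathfrak{c}^m\,\bb{P}(X_{n-m}=x)$, is the Chapman--Kolmogorov bound with the inequality reversed, and it fails on the same example. You correctly spotted the time-direction problem that the paper passes over, but neither argument supplies a lower bound at the earlier time.

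What does work, for $|z|<1$, is to move \emph{forward} in time towards the fixed point $z$. Set $d_k:=\lceil (n_k|z-z_k|+2)/(1-|z|)\rceil=o(n_k)$ and $N_k:=n_k+d_k$. Then $|[[N_kz]]_{N_k}-[[n_kz_k]]_{n_k}|\le d_k$ with the correct parity, hence $\bb{P}(X_{N_k}=[[N_kz]]_{N_k})\ge \mathfrak{c}^{d_k}\,\bb{P}(X_{n_k}=[[n_kz_k]]_{n_k})$. This transfers the assumed bound to the fixed point $z$, where \eqref{point-rate-function-notation} yields the contradiction. So the lemma is not needed in its stated form.

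The same trick also recovers \eqref{e:switch_limit_scgf} for $|z|<1$, using the pointwise asymptotics at $z$. For this, start from $[[n_k'z]]_{n_k'}$ at a time $n_k'=n_k-O(m_k+1)$ to bound the first term from below. The endpoints $z=\pm1$ need a separate exponential Chebyshev bound. Alternatively, \eqref{closed-LDP} follows directly from the G\"artner--Ellis upper bound, since \eqref{scgf-convergence} provides a finite limiting scaled cumulant generating function.
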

\begin{proof}
First we recall the assumption that of uniform ellipticity consists of the existence of $\mathfrak{c} \in (0,1)$ for which $\mu(\omega  \in ([\mathfrak{c}, 1 - \mathfrak{c}])^{\bb{Z}}) = 1$ and thus an application of the Chapman-Kolmogorov equation implies that for any $x,x' \in \bb{Z}$ and $ m = |x'-x|$
\begin{equation}
\label{UEllip}
\begin{aligned}
  \bb{P}(X_{n } = x')  \leq \bb{P}(X_{n-m} = x) \times \mathfrak{c}^m .    
\end{aligned}
\end{equation}
Therefore with $x_k = [[n_k z]]_{n_k}$ and $x'_k = [[n_k z_k]]_{n_k}$, $m_k = |[[n_k z]]_{n_k} - [[n_k z_k]]_{n_k}|$ it follows that
\[
\log  \bb{P}(X_{n_k -m_k} = x_k) -\log  \bb{P}(X_{n_k} = x'_k) \geq - m_k \log \mathfrak{c}
\]
To conclude the proof it suffices to note that divide both sides by $-\frac{1}{n_k}$ and take the limit in $k$ and to note that $\lim_k \frac{m_k}{n_k} = 0$.

\end{proof}
\section*{Acknowledgement}

We take this opportunity to thank Claudio Landim, whose lessons have generously, and through diverse interactions, travelled “au-delà du Rhin”, helping us develop an enduring interest in the theory of scaling limits of stochastic processes and ultimately enabling us to admire its subtle intrinsic beauty.

\pagebreak

\bibliographystyle{plain}
\bibliography{survey}
\end{document}